\documentclass[a4paper, reqno, 10pt]{amsart}

\usepackage{verbatim}
\usepackage[dvips]{color}
\usepackage[usenames,dvipsnames]{xcolor}
\usepackage{amssymb}
\usepackage[active]{srcltx}
\usepackage[colorlinks,pdfpagelabels,pdfstartview = FitH,bookmarksopen = true,bookmarksnumbered = true,linkcolor = NavyBlue,plainpages = false,hypertexnames = false,citecolor = OliveGreen] {hyperref}
\usepackage[italian, textsize=tiny, color=BlueGreen, backgroundcolor=BlueGreen, linecolor=BlueGreen]{todonotes}
\usepackage{mathtools}
\usepackage{empheq}
\usepackage{a4wide}
\usepackage{aligned-overset}
\usepackage{kotex}

\newcommand\blfootnote[1]{%
\begingroup
\renewcommand\thefootnote{}\footnote{#1}%
\addtocounter{footnote}{-1}%
\endgroup
}

\makeatletter
\@namedef{subjclassname@2020}{\textup{2020} Mathematics Subject Classification}
\makeatother

\allowdisplaybreaks

\title[Double phase problems with measure data]{Gradient estimates for degenerate elliptic measure data problems with double phase}

\author[Song]{Kyeong Song}
\address{School of Mathematics,
Korea Institute for Advanced Study, Seoul 02455, Republic of Korea}
\email{kyeongsong@kias.re.kr}

\author[Youn]{Yeonghun Youn}
\address{Department of Mathematics Education,
Incheon National University, Incheon 21999, Republic of Korea}
\email{yeonghunyoun@inu.ac.kr}

\subjclass[2020]{35B65;  
35J70; 
35R05; 
35R06. 
}

\keywords{Double phase problem; Measure data; SOLA; Gradient estimate}

\newtheorem{theorem}{Theorem}[section]

\newtheorem{proposition}[theorem]{Proposition}
\newtheorem{lemma}[theorem]{Lemma}

\theoremstyle{definition}
\newtheorem{definition}[theorem]{Definition}
\newtheorem{remark}[theorem]{Remark}

\numberwithin{equation}{section}

\def\eqn#1$$#2$${\begin{equation}\label#1#2\end{equation}}
\def\charfn_#1{{\raise1.2pt\hbox{$\chi_{\kern-1pt\lower3pt\hbox{{$\scriptstyle#1$}}}$}}}

\makeatletter
\newcommand{\pushright}[1]{\ifmeasuring@#1\else\omit\hfill$\displaystyle#1$\fi\ignorespaces}
\newcommand{\pushleft}[1]{\ifmeasuring@#1\else\omit$\displaystyle#1$\hfill\fi\ignorespaces}
\makeatother

\DeclareMathOperator*{\osc}{osc}

\DeclareMathOperator*{\data}{\mathtt{data}}

\def\loc{{\operatorname{loc}}}

\newcommand{\dx}{\,dx}

\def\mean#1{\mathchoice%
          {\mathop{\kern 0.2em\vrule width 0.6em height 0.69678ex depth -0.58065ex
                  \kern -0.8em \intop}\nolimits_{\kern -0.4em#1}}%
          {\mathop{\kern 0.1em\vrule width 0.5em height 0.69678ex depth -0.60387ex
                  \kern -0.6em \intop}\nolimits_{#1}}%
          {\mathop{\kern 0.1em\vrule width 0.5em height 0.69678ex
              depth -0.60387ex
                  \kern -0.6em \intop}\nolimits_{#1}}%
          {\mathop{\kern 0.1em\vrule width 0.5em height 0.69678ex depth -0.60387ex
                  \kern -0.6em \intop}\nolimits_{#1}}}

\newcommand{\vertiii}[1]{{\left\vert\kern-0.25ex\left\vert\kern-0.25ex\left\vert #1 
			\right\vert\kern-0.25ex\right\vert\kern-0.25ex\right\vert}}
			
\def\avenorm#1{\mathchoice%
          {\mathop{\kern 0.2em\vrule width 0.6em height 0.69678ex depth -0.58065ex
                  \kern -0.545em \|{#1}\|}}%
          {\mathop{\kern 0.1em\vrule width 0.5em height 0.69678ex depth -0.60387ex
                  \kern -0.495em \|{#1}\|}}%
          {\mathop{\kern 0.1em\vrule width 0.5em height 0.69678ex depth -0.60387ex
                  \kern -0.495em \|{#1}\|}}%
          {\mathop{\kern 0.1em\vrule width 0.5em height 0.69678ex depth -0.60387ex
                  \kern -0.495em \|{#1}\|}}}

\newtoks\by
\newtoks\paper
\newtoks\book
\newtoks\jour
\newtoks\yr
\newtoks\pages
\newtoks\vol
\newtoks\publ

\def\ota{{\hbox{\bf ???}}}
\def\cLear{\by=\ota\paper=\ota\book=\ota\jour=\ota\yr=\ota
\pages=\ota\vol=\ota\publ=\ota}
\def\endpaper{\the\by, \textit{\the\paper},
{\the\jour} \textbf{\the\vol} (\the\yr), \the\pages.\cLear}
\def\endbook{\the\by, \textit{\the\book},
\the\publ, \the\yr.\cLear}
\def\endpap{\the\by, \textit{\the\paper}, \the\jour.\cLear}
\def\endproc{\the\by, \textit{\the\paper}, \the\book, \the\publ,
\the\yr, \the\pages.\cLear}

\begin{document}

\begin{abstract}
We study nonlinear elliptic equations modeled on
\[ -\mathrm{div}\,(|Du|^{p-2}Du+a(x)|Du|^{q-2}Du) = \mu, \]
where $2\le p<q<\infty$, $a(\cdot) \ge 0$, and $\mu$ is a signed Borel measure with finite total mass.
We prove local Calder\'on--Zygmund type gradient estimates for SOLA (Solutions Obtained as Limits of Approximations) by finding new and natural assumptions on $p$, $q$ and $a(\cdot)$. 
\end{abstract}

\blfootnote{K. Song was supported by a KIAS individual grant (MG091702) at Korea Institute for Advanced Study. Y. Youn was supported by Incheon National University Research Grant in 2025 (No. 2025-0175).}

\maketitle


\section{Introduction}
In this paper, we investigate the following Dirichlet problem:
\begin{equation}\label{main.eq}
\left\{
\begin{aligned}
-\mathrm{div}\,A(x,Du) &= \mu &\text{in }& \Omega, \\
u&= 0& \text{on }& \partial\Omega.
\end{aligned}
\right.
\end{equation}
Here, $\Omega \subset \mathbb{R}^{n}$ ($n \ge 2$) is a bounded domain and $\mu$ is a signed Borel measure on $\Omega$ with finite total mass $|\mu|(\Omega)<\infty$; in the following, we extend $\mu$ to $\mathbb{R}^{n}$ by letting $|\mu|(\mathbb{R}^{n}\setminus\Omega) = 0$.
The vector field $A: \Omega\times\mathbb{R}^{n} \to \mathbb{R}^{n}$ is assumed to be continuously differentiable with respect to the second variable $z \in \mathbb{R}^{n}$, with $\partial A(\cdot) \equiv \partial_{z}A(\cdot)$ being Carath\'eodory regular. Moreover, it satisfies the following growth, ellipticity and continuity assumptions:
\begin{equation}\label{growth}
\left\{
\begin{aligned}
|A(x,z)| + |\partial A(x,z)||z| &\le L(|z|^{p-1} + a(x)|z|^{q-1}),\\
\nu (|z|^{p-2} + a(x)|z|^{q-2})|\zeta|^{2} &\le \partial A(x,z) \zeta \cdot \zeta , \\
|A(x_{1},z)-A(x_{2},z)| & \le L|a(x_{1})-a(x_{2})||z|^{q-1} ,
\end{aligned}
\right.
\end{equation}
for any $x,x_{1},x_{2} \in \Omega$ and $z , \zeta \in \mathbb{R}^{n}$, where $0<\nu  \le L <\infty$ are fixed constants and $a:\Omega \rightarrow [0,\infty)$ is a modulating coefficient satisfying
\begin{equation*}
0 \le a(x) \le \|a\|_{L^{\infty}} \qquad \text{for a.e.}\;\; x\in\Omega. 
\end{equation*}
Throughout this paper, unless otherwise specified, we assume
\begin{equation}\label{p.bound} 
2 \le p \le n \qquad \text{and} \qquad p < q < \infty.
\end{equation}

Assumptions \eqref{growth} are modeled on the example
\begin{equation}\label{model}
 -\mathrm{div}(p|Du|^{p-2}Du + a(x)q|Du|^{q-2}Du) = \mu \quad \text{in}\;\; \Omega.
\end{equation}
In this case, the operator in the left-hand side of \eqref{model} naturally appears in the minimization problem of the double phase functional
\begin{equation}\label{dp.ftnal}
w \mapsto \int_{\Omega}\left(|Dw|^{p} + a(x)|Dw|^{q}\right)\,dx.
\end{equation}
Such a functional was first studied by Zhikov \cite{Zhi86,Zhi95} in the contexts of Lavrentiev phenomena and homogenizations of strongly anisotropic materials. 
A main feature of the double phase problem is the drastic change of its growth and ellipticity according to the position of $x\in\Omega$. 
More precisely, it is a kind of nonuniformly elliptic problem in the following sense: by letting
\[ A_{\mathcal{P}}(x,z) \coloneqq p|z|^{p-2}z + a(x)q|z|^{q-2}z \]
for $x \in \Omega$ and $z \in \mathbb{R}^n$, 
if $B_{R} \cap \{a(x)=0\} \neq \emptyset$ for a ball $B_R \equiv B_R(x_0)$ and $a(x) \approx |x-x_0|^{\alpha}$, then the (nonlocal) ellipticity ratio
\begin{equation}\label{ellipticity}
\frac{\sup_{x\in B_R}\text{highest eigenvalue of }\partial A_{\mathcal{P}}(x,z)}{\inf_{x\in B_R}\text{lowest eigenvalue of }\partial A_{\mathcal{P}}(x,z)} \approx 1 + R^{\alpha}|z|^{q-p},
\end{equation}
is unbounded with respect to the gradient variable $z$. 
In this point of view, the double phase problem is one of the prominent examples of non-autonomous problems featuring nonstandard growth and nonuniform ellipticity, see \cite{MR21} for a comprehensive overview.

In particular, since the seminal papers by Colombo and Mingione \cite{CM15a,CM15b,CM16JFA}, sharp regularity results for weak solutions to \eqref{main.eq} or minimizers of \eqref{dp.ftnal} have been extensively studied, see for instance \cite{BB,BCM15,BCM18,BL22,BO,DM19,Ok17CV} and references therein. A central assumption for the validity of such regularity results is the following: $a \in C^{0,\alpha}(\Omega)$ for some $\alpha \in (0,1]$ and 
\begin{equation}\label{rate.weak.sol}
\frac{q}{p} \le 1+ \frac{\alpha}{n},
\end{equation}
whose sharpness is shown in \cite{ELM}. 
We also mention the recent paper \cite{BBK}, in which self-improving properties of very weak solutions were proved for double phase systems with data in divergence form under an assumption similar to \eqref{rate.weak.sol}.

In view of \eqref{model} and \eqref{dp.ftnal}, we denote
\begin{equation}\label{def.H}
H(x,t) \coloneqq t^{p} + a(x)t^{q} \qquad \text{and} \qquad h(x,t) \coloneqq t^{p-1} + a(x)t^{q-1}
\end{equation}
for $x \in \Omega$ and $t \ge 0$. Basic properties of the functions $H$ and $h$, along with related function spaces, will be discussed in the next section. 
Note that if $p>n$, then $\mu \in (W^{1,p}_{0}(\Omega))^{*} \subset (W^{1,H}_{0}(\Omega))^{*}$ and we are in the realm of classical weak solutions; in this case,  regularity results for weak solutions to \eqref{main.eq} are well established in the aforementioned references. 
Therefore, in this paper, it is not restrictive to consider the case $p \le n$ only, in which problem \eqref{main.eq} does not in general have weak solutions in the natural energy space $W^{1,H}_{0}(\Omega)$. Thus, a weaker notion of solutions should be considered in this case. 
Among various notions of solutions, we deal with the notion of SOLA introduced in \cite{BG89}, see the next section for details.   
We also refer to \cite{BBGGPV95,Ch18,CiMa17NA,DMOP99} for other notions of solutions. 

In this paper, we are interested in gradient integrability estimates of Calder\'on--Zygmund type. 
For nonlinear elliptic measure data problems with standard $p$-growth, modeled on
\begin{equation}\label{measure.p}
-\mathrm{div}\,(|Du|^{p-2}Du) = \mu,
\end{equation}
Mingione \cite{Min10} employed the $1$-fractional maximal function of $\mu$, defined by
\begin{equation}\label{def.M1} 
\mathbf{M}_{1}(\mu)(x) \coloneqq \sup_{r>0}\frac{|\mu|(B_{r}(x))}{r^{n-1}}, \qquad x \in \mathbb{R}^{n},
\end{equation}
to obtain local gradient estimates for \eqref{measure.p} in Lorentz--Morrey spaces. This approach was further developed by Phuc \cite{Phuc}, who obtained global weighted gradient estimates for \eqref{measure.p} and applied them to related Riccati type equations.

Later, such gradient estimates were extended to elliptic measure data problems with various types of nonstandard growth, such as Orlicz growth \cite{BCY21a}, $p(x)$-growth \cite{BOP17}, and mild phase transition \cite{BCY21b}. 
However, as far as we are aware, there is no analogous result for measure data problems with double phase growth. 
Indeed, the nonuniform ellipticity of the double phase operator, which is stronger than those of operators with $p(x)$-growth or mild phase transition, causes several difficulties in establishing regularity estimates. Moreover, in order to handle the  difficulties in the setting of measure data problems, one needs new and nontrivial ideas, see Section \ref{sec:tech}.

The aim of this paper is to prove local Calder\'on--Zygmund type estimates for the measure data problem \eqref{main.eq} with double phase growth. To this aim, we find a natural structural assumption (see \eqref{rate.sola} below), which is a suitable replacement of \eqref{rate.weak.sol}, and establish new reverse H\"older type estimates and comparison estimates. 
Then, with the help of these ingredients, we prove our main result given in Theorem \ref{main.thm} below by  applying the maximal function free technique introduced in \cite{AM} and revisited in \cite{BCY21a,BCY21b}.
To the best of our knowledge, our result is the first one concerning gradient regularity for double phase problems with measure data.

\subsection{Assumptions and main result}

Here we introduce our main assumptions on $p$, $q$, and $a(\cdot)$ for our regularity results. We assume that 
\begin{equation}\label{a.holder}
a \in C^{0,\alpha}(\Omega) \qquad \text{for some}\;\; \alpha \in (0,1]
\end{equation}
and that the growth exponents $p$ and $q$ satisfy
\begin{equation}\label{rate.sola}
\frac{q-1}{p-1} < 1+\frac{\alpha}{n-1}.
\end{equation}

\begin{remark}
Let us briefly discuss assumption \eqref{rate.sola}. 
\begin{itemize}
\item[(i)] In the papers \cite{BCM18,CM15b,CM16JFA,DM19} concerned with minimizers or weak solutions, assumption \eqref{rate.weak.sol} is used to correct the nonstandard growth of the double phase functional \eqref{dp.ftnal} with respect to the gradient variable. More precisely, in view of \eqref{ellipticity}, it allows one to control the second term in \eqref{dp.ftnal} in terms of $W^{1,p}$-energy when $a(x)$ is close to $0$,  thereby reducing several regularity estimates for \eqref{dp.ftnal} to those for the $p$-Dirichlet functional in such a situation. We point out that \eqref{rate.weak.sol} stems from the fact that local minimizers of \eqref{dp.ftnal} are $W^{1,p}$-regular.  

\item[(ii)] The role of our assumption \eqref{rate.sola} is essentially the same; the difference is due to the fact that solutions to measure data problems enjoy a modest regularity. In fact, since $\eqref{model}$ reduces to \eqref{measure.p} in the set $\{a(x) = 0\}$, we have that every SOLA to \eqref{main.eq} 
is $W^{1,s}$-regular for any $1 \le s < n(p-1)/(n-1)$, see Proposition \ref{prop.existence} below. 
In this situation, again recalling \eqref{ellipticity}, assumption \eqref{rate.sola} allows one to control the second term in \eqref{model} in terms of the $W^{1,s}$-norm of $Du$ when $a(x)$ is close to $0$. In this sense, our assumption \eqref{rate.sola} can be considered as a natural measure data analog of \eqref{rate.weak.sol}. 

\item[(iii)] Note in particular that when $p \le n$, \eqref{rate.sola} implies \eqref{rate.weak.sol}. 
\end{itemize}
\end{remark}

We now state our main theorem, see the next section for the definition of SOLA. 
Throughout this paper, we denote
\[ \data \coloneqq (n,p,q,\nu,L,\|a\|_{L^{\infty}},\alpha,[a]_{\alpha}, \|Du\|_{L^{\kappa}(\Omega)}), \]
where $\kappa$ is a constant given in \eqref{def.kappa} below.

\begin{theorem}\label{main.thm}
Let $u$ be a SOLA to \eqref{main.eq} under assumptions \eqref{growth}, \eqref{p.bound}, \eqref{a.holder} and \eqref{rate.sola}. 
Then we have the implication
\begin{equation} \label{cz.implication}
\mathbf{M}_{1}(\mu) \in L^{\gamma}_{\loc}(\Omega) \;\; \Longrightarrow \;\; h(\cdot,|Du|) \in L^{\gamma}_{\loc}(\Omega) \qquad \text{for any }\; \gamma \in (1,\infty). 
\end{equation}
Moreover, for any $\gamma \in (1,\infty)$, there exist a radius $R_{0} \equiv R_{0}(\data,|\mu|(\Omega),\gamma) \in (0,1)$ and constants $c\equiv c(\data) \ge 1$ and $c_{\gamma} \equiv c_{\gamma}(\data,\gamma) \ge 1$ such that
\begin{equation}\label{main.est}
\left(\mean{B_{R}}[h(x,|Du|)]^{\gamma}\,dx\right)^{1/\gamma} \le c\mean{B_{2R}}h(x,|Du|)\,dx  + c_{\gamma}\left(\mean{B_{2R}}\left[\mathbf{M}_{1}(\mu)\right]^{\gamma}\,dx\right)^{1/\gamma} 
\end{equation}
holds for any ball $B_{2R} \Subset \Omega$ with $2R \in (0,R_{0})$.
\end{theorem}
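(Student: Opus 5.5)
We prove the estimate for the approximating weak solutions $u_k$ (solving \eqref{main.eq} with smooth data $\mu_k \to \mu$ weakly-$*$, $|\mu_k|(B_r(x))\lesssim |\mu|(B_{r+1/k}(x))$), with constants independent of $k$, and then pass to the limit. The convergence $Du_k\to Du$ in $L^{\kappa}$ and a.e., Fatou's lemma and lower semicontinuity of $\mathbf{M}_1$ under weak convergence of measures give \eqref{main.est} for the SOLA $u$. The implication \eqref{cz.implication} then follows by a covering argument. The approach is the maximal-function-free technique of \cite{AM} as adapted in \cite{BCY21a,BCY21b}: a stopping-time exit argument on the level sets of $h(x,|Du|)$ combined with comparison estimates, followed by the standard layer-cake integration.

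\emph{Step 1 (comparison).} On a ball $B_{2r}\subset B_{2R}$ let $w$ solve $-\mathrm{div}\,A(x,Dw)=0$ in $B_{2r}$ with $w=u$ on $\partial B_{2r}$. Testing with truncations $T_t(u-w)$ in the usual Boccardo--Gallou\"et way, and using $p\ge 2$ through monotonicity, we aim for
\[
\mean{B_{2r}} h(x,|Du-Dw|)\,dx \le c\,\frac{|\mu|(B_{2r})}{r^{n-1}} + c\left(\frac{|\mu|(B_{2r})}{r^{n-1}}\right)^{\theta}\left(\mean{B_{2r}} h(x,|Du|)\,dx\right)^{1-\theta}
\]
for some $\theta\in(0,1]$. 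Next we freeze the coefficient. Let $v$ solve $-\mathrm{div}\,A(x_m,Dv)=0$ in $B_r$ with $v=w$ on $\partial B_r$, where $a(x_m)=\inf_{B_{2r}}a$ (or $\sup$), giving an autonomous $(p,q)$-Orlicz problem with Lipschitz estimates. We estimate $Dw-Dv$ with the continuity condition in \eqref{growth}, which produces an error of size $[a]_\alpha r^{\alpha}\,\mean{}|Dw|^{q-1}$.

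\emph{Step 2 (main obstacle).} We must control $r^\alpha|Dw|^{q-1}$ by $h(x,|Dw|)$ uniformly when $a$ is small on the ball, i.e.\ in the $p$-phase. This is exactly where \eqref{rate.sola} enters. We use a higher-integrability (reverse H\"older) estimate for $w$, a Gehring-type bound of $h(x,|Dw|)$ in $L^{1+\delta}$ by its average. We combine it with the a priori bound $\|Du\|_{L^\kappa}$ with $\kappa<n(p-1)/(n-1)$ chosen so that $(q-1)/(p-1)<1+\alpha/(n-1)$ permits
\[
r^{\alpha}|Dw|^{q-p}\lesssim r^{\alpha}\big(r^{-n/\kappa}\|Dw\|_{L^\kappa}\big)^{q-p}\le c(\data)\,r^{\epsilon}.
\]
The comparison of Step 1 transfers the $L^\kappa$ control from $u$ to $w$. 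Making this precise is the delicate part: the exponent bookkeeping must let $\kappa$ be strictly below the SOLA integrability threshold while still absorbing the $q$-phase term. I would first try to mimic the Colombo--Mingione case distinction ($p$-phase when $\inf_{B}a\le K r^{\alpha}$, $(p,q)$-phase otherwise, where $a$ is comparable on the ball). The smallness $r<R_0$ comes from $r^\epsilon$ and from $|\mu|(\Omega)$ bounding the $L^\kappa$ norm.

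\emph{Step 3 (level sets).} Fix $\lambda_0\approx\mean{B_{2R}}h(x,|Du|)dx+\big(\mean{B_{2R}}\mathbf M_1(\mu)^{\gamma}\big)^{1/\gamma}$. For $\lambda\ge \lambda_0$ take the set $E(\lambda)=\{x\in B_R: h(x,|Du|)>\lambda\}$. For each point of $E(A\lambda)$, a stopping-time (exit-time) argument gives a radius $r_x$ with
\[
\mean{B_{r_x}}h(x,|Du|)\,dx+\frac{|\mu|(B_{r_x})}{r_x^{\,n-1}}\approx\lambda,
\]
with the reverse inequality holding on all larger radii. Vitali covering, the comparison from Steps 1 and 2, and the $L^\infty$ bound of $h(x,|Dv|)$ by $c\lambda$ then yield the good-$\lambda$ inequality
\[
|E(A\lambda)|\le \varepsilon\,|E(\lambda)| + c_\varepsilon\,\big|\{\mathbf M_1(\mu)>\delta_\varepsilon\lambda\}\cap B_{2R}\big| .
\]
Multiplying by $\lambda^{\gamma-1}$, integrating, choosing $\varepsilon A^{\gamma}<1/2$ and reabsorbing (with truncation of $h$ for finiteness) gives \eqref{main.est}.
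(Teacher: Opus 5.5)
Your architecture matches the paper's: exit-time covering, comparison with $w$ and then with a frozen $v$, a phase split with parameter $K$, the Lipschitz bound for $v$, and layer-cake integration. You also correctly spot the gain $r^{\alpha-n(q-p)/\kappa}$ that \eqref{kappa.range} makes positive. The gap is that the step making this gain usable, which is the heart of the paper, is missing.

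In the $p$-phase the $w$--$v$ error, after Young's inequality, is $a(x_m)\mean{2B}|Dw|^{q}\,dx$ (cf. \eqref{p.phase.start}). You therefore need a reverse H\"older inequality lifting $Dw$ from $L^{\kappa}$ (or lower) to $L^{q}$ on a smaller ball, with a constant depending only on $\|Dw\|_{L^{\kappa}}$. Your display $r^{\alpha}|Dw|^{q-p}\lesssim r^{\alpha}(r^{-n/\kappa}\|Dw\|_{L^{\kappa}})^{q-p}$ is false pointwise and only becomes meaningful after such an inequality. The tools you name do not supply it:
\begin{itemize}
\item Gehring-type and Colombo--Mingione higher integrability for double phase equations have constants depending on $\|Dw\|_{L^{p}}$ or $\|H(\cdot,|Dw|)\|_{L^{1}}$. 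These are not uniformly controlled here: the energy of $u_k$, hence of $w$, is not bounded in $k$, and only $L^{s}$ norms with $s<n(p-1)/(n-1)\le p$ stay bounded.
\item A small-$\delta$ improvement of $h(x,|Dw|)$ yields only $|Dw|\in L^{(p-1)(1+\delta)}$ where $a$ vanishes, far from $L^{q}$.
\item The plain Caccioppoli inequality plus Sobolev--Poincar\'e produces the factor $r^{\alpha}(\mean{B}|Dw|^{q_*}\,dx)^{(q-p)/q_*}$. Under \eqref{rate.sola}, $q_*$ can exceed $n(p-1)/(n-1)$ (for instance $p=2$, $n$ large), so this factor is not controlled at SOLA level either.
\end{itemize}
The paper's remedy is a weighted Caccioppoli inequality, obtained by testing with $(d+(w-(w)_{B})_{\pm})^{1-\xi}\eta^{q}$. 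This gives Lemma~\ref{lem.self.1}: the exponent $\bar p$ is controlled by $m(\bar p)=(\bar p+q-p)_*<\bar p$. Finitely many iterations via Lemma~\ref{lem.selfaux} reach $\kappa$ and give \eqref{Lp.Lkappa}. Then \cite[(5.45)]{CM15b} gives \eqref{higher.est} and \eqref{higher.combined}. With $\|Dw\|_{L^{\kappa}}$ bounded via \eqref{Dw.data}, only then do you get $a(x_m)\mean{2B}|Dw|^{q}\,dx\le c\,\rho^{\alpha-n(q-p)/\kappa}\big(\mean{4B}|Dw|^{p-1}\,dx\big)^{p/(p-1)}$.

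A secondary gap is the passage to the limit. Lower semicontinuity of $\mathbf{M}_1$ goes the wrong way; you need $\limsup_k\int_{B_{2R}}[\mathbf{M}_1(\mu_k)]^{\gamma}\,dx\le\int_{B_{2R}}[\mathbf{M}_1(\mu)]^{\gamma}\,dx$. For mollified data this follows from $\mathbf{M}_1(\mu*\phi_k)\le\mathbf{M}_1(\mu)*\phi_k$. Your condition $|\mu_k|(B_r(x))\lesssim|\mu|(B_{r+1/k}(x))$ alone does not control radii $r\lesssim 1/k$. More importantly, the theorem concerns an arbitrary SOLA, whose approximating measures satisfy only \eqref{muj.conv}. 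That condition says nothing about $\mathbf{M}_1(\mu_j)$ at small scales, and uniqueness of SOLA is not available. The paper sidesteps this by running the exit-time argument on $u$ itself, with $\Psi$ built from $\mathbf{M}_1(\mu)$. It calls on $u_j$ only inside each fixed ball $4B_i$, with $j$ so large that \eqref{approx.Du.2}--\eqref{approx.mu} hold.

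Smaller points:
\begin{itemize}
\item Your Step~1 bound is only a target. In the $p$-phase the paper handles $a(x)|Du-Dw|^{q-1}$ by the $L^{q-1}$ comparison estimate, admissible by \eqref{q-1.admissible}, together with $\rho^{\alpha-(n-1)(q-p)/(p-1)}[|\mu|(\Omega)]^{(q-p)/(p-1)}\le 1$.
\item With an exit time defined through averages you obtain the integral form $\int_{E(r_1,\lambda)}h\,dx\le S\int_{E(r_2,\lambda/c)}h\,dx+\dots$, not your measure-form good-$\lambda$ inequality. This is harmless if you integrate against $\lambda^{\gamma-2}$.
\item $v$ must be frozen at $\sup_{2B}a$, not $\inf$, so that $h(x,|Dv|)\le h(x_m,|Dv|)$ and the Lipschitz bound transfers to $h(x,|Dv|)$.
\end{itemize}
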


\subsection{Techniques and novelties}\label{sec:tech}
Here we outline our approach and point out the main novelties.

A crucial ingredient in our proof of Theorem \ref{main.thm} is a gradient integrability estimate for the homogeneous equation
\begin{equation}\label{homo.equation} 
\mathrm{div}\,A(x,Dw) = 0,
\end{equation}
whose proof is not trivial when $a(x)$ is close to $0$. 
In the literature concerned with weak solutions to \eqref{homo.equation}, the constants in such estimates usually depend on the $L^{p}$-norm of $Dw$, see for instance \cite{BO,CM15b,CM16JFA}. 
This is a common phenomenon in nonuniformly elliptic problems. However, such quantities are not applicable in our setting of measure data problems, and we need to obtain an additional reverse H\"older type estimate for \eqref{homo.equation} (see  Section \ref{sec3}), which is a new result. 
In the case of $p(x)$-growth \cite{BOP17} or mild phase transition \cite{BCY21b}, similar reverse H\"older type estimates were obtained by combining standard energy estimates and Sobolev--Poincar\'e inequalities. 
Such an approach cannot be applied directly to equation \eqref{homo.equation} with double phase growth, since the Sobolev--Poincar\'e type inequality for $w$ (see for instance \cite[Theorem 1.6]{CM15b}) still involves constants depending on the $L^p$-norm of $Dw$. 
We thus develop a new approach, which employs a weighted energy type estimate for $w$ and Sobolev--Poincar\'e inequalities for usual Sobolev spaces, along with \eqref{rate.sola}. 
This gives a preliminary version of reverse H\"older type estimate, which can be iterated finitely many times in order to yield the desired estimate. 
This is the point where the exponent $\kappa$ defined in \eqref{def.kappa} comes into play. 

Another important tool is a comparison estimate between \eqref{main.eq} and a Dirichlet problem involving \eqref{homo.equation}. 
In this paper, we also obtain a new comparison estimate below the natural energy space, which extends those concerned with $p$-Laplacian type equations established in \cite{KM14BMS,Min07} in a precise form. Note that, in contrast with those obtained in \cite[Step 2 in the proof of Theorem~1.1]{CM16JFA}, we prove our comparison estimate (see \eqref{comp1} below) by dividing cases into the $p$-phase (the case when $a(x)$ is close to $0$) and the $(p,q)$-phase (the case when $a(x)$ is far from $0$), with the help of \eqref{rate.sola}.
We believe that the comparison estimate obtained in this paper will be very useful in proving various kinds of gradient regularity results, such as gradient potential estimates and higher fractional differentiability, for \eqref{main.eq}.

\begin{remark} We finally discuss some possible extensions and open questions.
\begin{itemize}
\item[(i)] We expect that, with the same spirit as in \cite{BL22,BO}, it will be possible to establish global gradient estimates for problems of the type \eqref{main.eq}. 
\item[(ii)] We also expect that it will be possible to extend Theorem~\ref{main.thm} to the subquadratic case; however, this case needs somewhat different approaches and will be considered in a forthcoming work. Indeed, not only the comparison estimates, but also the proof of regularity estimates for homogeneous equations (see Section \ref{sec3} below) needs modification. 
\item[(iii)] Specifically, the so-called strongly singular case $1<p\le 2-1/n$ is much harder, in which one needs to consider other notions of solutions; moreover, such solutions (in any sense) to \eqref{main.eq} do not belong to the space $W^{1,1}(\Omega)$, which causes a substantial difficulty in establish comparison estimates. Gradient estimates for singular $p$-Laplacian type equations with measure data can be found in \cite{NP19,NP23ARMA}.
\item[(iv)] Similarly to the case of \eqref{measure.p} (see for instance \cite{BBGGPV95,Min07}), it is actually possible to show that any SOLA to \eqref{main.eq} belongs to the Marcinkiewicz space $L^{n(p-1)/(n-1),\infty}$. In this point of view, an interesting question is whether our regularity results continue to hold in the borderline case
\[ \frac{q-1}{p-1} = 1 + \frac{\alpha}{n-1}. \]
As usual, the borderline case exhibits several subtle issues; specifically,  the proofs of several estimates in this paper depend on whether $\alpha<1$ or $\alpha=1$ in this borderline case. 
\end{itemize}
\end{remark}

The remaining part of this paper is organized as follows. In the next section, we introduce basic notations and function spaces, and discuss the existence of SOLA to \eqref{main.eq}. 
In Section \ref{sec3}, we obtain reverse H\"older type estimates and gradient integrability results for homogeneous equations linked to \eqref{main.eq}. 
Finally, in Section \ref{sec4}, we establish suitable approximation and comparison estimates to prove Theorem \ref{main.thm} via exit time and covering arguments.

\section{Preliminaries} 
\subsection{Notation}
We denote by $c$ a general constant greater than or equal to one, 
whose value may vary from line to line. Specific dependencies of constants are denoted by using parentheses.

For any $p >1$, we denote its H\"older conjugate by $p' \coloneqq p/(p-1)$. Moreover,
\begin{equation*}
p^{*} \coloneqq 
\left\{
\begin{aligned}
&\frac{np}{n-p} & \text{if }& p < n, \\
&\text{any number in }(p,\infty) & \text{if }& p \ge n
\end{aligned}
\right.
\end{equation*}
and 
\[ p_{*} \coloneqq \max\left\{\frac{np}{n+p},1\right\} \]
denotes the Sobolev conjugate and the inverse Sobolev conjugate of $p$, respectively.

As usual, we denote by $B_{r}(x_0) \coloneqq \{x\in\mathbb{R}^{n}:|x-x_0|<r\}$ the $n$-dimensional open ball with center $x_0 \in \mathbb{R}^n$ and radius $r>0$. If there is no confusion, we omit the center and simply denote $B_{r}\equiv B_{r}(x_0)$. 
Also, given a ball $B$, we denote by $\gamma B$ the concentric ball with radius magnified by a factor $\gamma>0$. 
Unless otherwise stated, different balls in the same context are concentric. 

The ($n$-dimensional) Lebesgue measure of a measurable set $U \subset \mathbb{R}^n$ is denoted by $|U|$. For an integrable map $f:U \to \mathbb{R}^{k}$, with $k \in \mathbb{N}$ and $0<|U|<\infty$, we denote the integral average of $f$ over $U$ by
\begin{equation*}
(f)_{U} \coloneqq \mean{U}f\,dx  \coloneqq \frac{1}{|U|} \int_{U}f\,dx .
\end{equation*}

\subsection{Function spaces}
Observe that the ellipticity assumption $\eqref{growth}_{2}$ on $A(\cdot)$ implies the following monotonicity condition:
\begin{equation}\label{mono}
(|z_{1}|+|z_{2}|)^{p-2}|z_{1}-z_{2}|^{2} + a(x)(|z_{1}|+|z_{2}|)^{q-2}|z_{1}-z_{2}|^{2} \le c(A(x,z_{1})-A(x,z_{2}))\cdot(z_{1}-z_{2})
\end{equation}
for any $x \in \Omega$ and $z_{1},z_{2} \in \mathbb{R}^{n}$, where $c\equiv c(n,p,q,L,\nu)$.
In addition, when $2 \le p <q$, we further have
\begin{equation}\label{mono2}
|z_{1}-z_{2}|^{p} + a(x)|z_{1}-z_{2}|^{q} \le c(A(x,z_{1})-A(x,z_{2}))\cdot(z_{1}-z_{2}).
\end{equation}

We recall the function $H(\cdot)$ defined in \eqref{def.H}. It is a generalized Young functions (see \cite{HH19book}) that satisfies the $\Delta_2$ and $\nabla_2$ conditions: $H(x,2t) \le 2^q H(x,t)$ and $H(x,t) \le 2^{-p/(p-1)}H(x,2^{1/(p-1)}t)$ for any $x \in \Omega$ and $t \ge 0$. Accordingly, for any open set $U \subseteq \Omega$,
we consider the Musielak--Orlicz space $L^{H}(U)$, which is defined as the set of all measurable function $f:U \to \mathbb{R}^n$ such that
\[ \int_{U}H(x,|f|)\,dx < \infty. \]
Due to the $\Delta_2$ and $\nabla_2$ conditions, it is a separable, reflexive Banach space endowed with the Luxemburg norm
\[ \|f\|_{L^{H}(U)} \coloneqq \inf\left\{ \lambda>0: \int_{U}H(|f|/\lambda)\,dx < \infty \right\}. \] 
Moreover, the Musielak--Orlicz--Sobolev space $W^{1,H}(U)$ is defined as the set of all functions $f \in L^{H}(U) \cap W^{1,1}(U)$ satisfying
\[ \int_{U}H(x,|Df|)\,dx < \infty, \]
which is also a separable reflexive Banach space endowed with the norm
\[ \|f\|_{W^{1,H}(U)} \coloneqq \|f\|_{L^H(U)} + \|Df\|_{L^H(U)}. \]
We also denote by $W^{1,H}_{0}(U)$ the closure of $C^{\infty}_{0}(U)$ in $W^{1,H}(U)$. 
For more on generalized Young functions and Musielak--Orlicz spaces, see \cite{HH19book}. 

\subsection{SOLA (Solutions Obtained as Limits of Approximations)}

Here we discuss the existence of SOLA to \eqref{main.eq}. 
Note that in this subsection, we consider the following sharp range of the growth exponents:
\begin{equation}\label{sharp.range}
2-1/n < p \le n \qquad \text{and} \qquad p < q < \infty. 
\end{equation}

\begin{definition}
Under assumptions \eqref{growth} and \eqref{sharp.range}, we say that $u \in W^{1,1}_{0}(\Omega)$ is a SOLA to \eqref{main.eq} if it is a distributional solution to $\eqref{main.eq}_1$, i.e., $A(x,Du) \in L^{1}(\Omega)$ and
\[ \int_{\Omega}A(x,Du)\cdot D\varphi\,dx = \int_{\Omega}\varphi \,d\mu \]
holds for any $\varphi \in C^{\infty}_{0}(\Omega)$. 
Moreover, it has to satisfy the following approximation property: there exists a sequence of functions $\{\mu_{j}\}\subset L^{\infty}(\Omega)$ such that $\mu_{j}\rightharpoonup \mu$ weakly* in the sense of measures and satisfies
\begin{equation}\label{muj.conv}
 \limsup_{j}|\mu_j|(B) \le |\mu|(\bar{B}) \quad \text{for any ball } B \subset \mathbb{R}^{n},
\end{equation}
and there exists a sequence of weak solutions $\{u_{j}\} \subset W^{1,H}_{0}(\Omega)$ to
\begin{equation}\label{approx.prob}
\left\{
\begin{aligned}
-\text{div}\,A(x,Du_{j}) & = \mu_{j}& \text{in }& \Omega, \\
u_{j} & = 0& \text{on }& \partial\Omega
\end{aligned}
\right.
\end{equation}
such that $u_{j} \to u$ in $W^{1,1}_{0}(\Omega)$.
\end{definition}

In order to discuss the existence of SOLA,  we first establish a global a priori estimate for \eqref{main.eq}, whose proof is similar to that of  \cite[Lemma~4.4]{BSS22}.

\begin{lemma}\label{global.est}
Let $u\in W^{1,H}_{0}(\Omega)$ be the weak solution to \eqref{main.eq} under assumptions \eqref{growth} and \eqref{sharp.range}. Then for any $s$ satisfying
\begin{equation}\label{exp.range}
1 \le s < \frac{n(p-1)}{n-1},
\end{equation}
we have
\begin{equation}\label{globalest}
\int_{\Omega}|Du|^{s}\,dx \leq c[|\mu|(\Omega)]^{s/(p-1)}
\end{equation}
for a constant $c\equiv c(n,p,q,\nu,L,|\Omega|,s)$. 
\end{lemma}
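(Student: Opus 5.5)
We argue as in the classical Boccardo--Gallou\"et truncation method, exploiting only the $p$-part of the coercivity. Since $A(x,Du)\cdot Du\ge c^{-1}|Du|^{p}$ (from \eqref{mono2} with $z_2=0$, or from \eqref{mono}), the double phase term can simply be discarded from the lower bound. For $k>0$ we use the truncation $T_k(u)\coloneqq \max\{-k,\min\{u,k\}\}$. Because $u\in W^{1,H}_0(\Omega)$ and $\mu$ acts on $W^{1,H}_0$ (we may assume $\mu\in L^\infty$ or in the dual, as for the approximating problems), $T_k(u)$ is an admissible test function. Testing gives
\[
\int_{\{|u|<k\}}|Du|^p\,dx \le c\int_\Omega A(x,Du)\cdot DT_k(u)\,dx = c\int_\Omega T_k(u)\,d\mu \le c\,k\,|\mu|(\Omega).
\]
In the same way, testing with $T_{1}(u-T_k(u))$-type functions gives, for every $k\ge0$,
\[
\int_{\{k\le |u|<k+1\}}|Du|^p\,dx\le c|\mu|(\Omega).
\]

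Next we derive the level-set estimates. From the first bound and Sobolev's inequality applied to $T_k(u)\in W^{1,p}_0$ (for $p<n$; for $p=n$ we use any large exponent), we get
\[
k^{p^*}|\{|u|\ge k\}|\le \|T_k u\|_{p^*}^{p^*}\le c\,(k|\mu|(\Omega))^{p^*/p},
\]
hence $|\{|u|\ge k\}|\le c\,(|\mu|(\Omega))^{n/(n-p)}k^{-n(p-1)/(n-p)}$. For the gradient we split
\[
|\{|Du|>\lambda\}|\le|\{|u|\ge k\}|+|\{|u|<k,|Du|>\lambda\}|\le |\{|u|\ge k\}| + \lambda^{-p}ck|\mu|(\Omega),
\]
and optimize in $k$. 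This yields the Marcinkiewicz bound
\[
|\{|Du|>\lambda\}|\le c\,[|\mu|(\Omega)]^{n/(n-1)}\lambda^{-n(p-1)/(n-1)}.
\]
Integrating the distribution function with the splitting $\int|Du|^s=s\int_0^\infty\lambda^{s-1}|\{|Du|>\lambda\}|\,d\lambda$, bounded by $|\Omega|\lambda_0^s$ plus the tail, with $s<n(p-1)/(n-1)$ and $\lambda_0$ chosen as $(|\mu|(\Omega))^{1/(p-1)}$ times a constant, gives \eqref{globalest}. The dependence on $|\Omega|$ enters here. Alternatively, one can avoid the Marcinkiewicz step and directly use the layer estimates with the weight $(1+|u|)^{-\theta}$, $\theta>1$: the bound $\int|Du|^p(1+|u|)^{-\theta}\le c|\mu|(\Omega)$, combined with H\"older and Sobolev, gives the same result. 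The homogeneity check is that the scaling $u\mapsto$ yields exponent $s/(p-1)$. The $q$-term does not break scaling since it was dropped; however, it still has to be matched against the right-hand side.

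The main obstacle is the homogeneity: the estimate must read exactly $[|\mu|(\Omega)]^{s/(p-1)}$ without an additive constant, although the operator is not homogeneous. To handle this, we first normalize. If $M\coloneqq|\mu|(\Omega)$, we do not rescale $u$, which would change $a$. Instead we carry $M$ through every inequality above, all of which are homogeneous in $(k,\lambda,M)$ because only $|Du|^p$ was used. Choosing $k=\lambda^{p-1}M^{-1}\cdot(\text{const})$-type balancing in the optimization keeps the power of $M$ exact: the final bound becomes $c|\Omega|\lambda_0^s$ plus tail with $\lambda_0=M^{1/(p-1)}$, and both terms equal $cM^{s/(p-1)}$. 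The only delicate point is $p=n$, where $p^*$ is arbitrary. There we choose $p^*$ large depending on $s$, and the constant then depends on $s$ as allowed.
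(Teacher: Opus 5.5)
Your main argument is correct, but it follows a different route from the paper's. You run the Boccardo--Gallou\"et scheme:
\begin{itemize}
\item testing with $T_k(u)$ gives $\int_{\{|u|<k\}}|Du|^p\,dx\le ck|\mu|(\Omega)$;
\item Sobolev in $W^{1,p}_0$ bounds $|\{|u|\ge k\}|$;
\item Chebyshev plus optimization in $k$ gives $|\{|Du|>\lambda\}|\le c[|\mu|(\Omega)]^{n/(n-1)}\lambda^{-n(p-1)/(n-1)}$;
\item integrating the distribution function, split at $\lambda_0=[|\mu|(\Omega)]^{1/(p-1)}$, gives \eqref{globalest}.
\end{itemize}
The exponents check out. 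Homogeneity is automatic, because only $A(x,z)\cdot z\ge c^{-1}|z|^p$ was used, so your ``main obstacle'' is not really an obstacle.

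The paper instead tests with $d^{1-\xi}-(d+u_{\pm})^{1-\xi}$. This gives the single weighted estimate $\int_\Omega|Du|^p(d+|u|)^{-\xi}\,dx\le c\,d^{1-\xi}|\mu|(\Omega)/(\xi-1)$. It then takes $\xi=n(p-s)/(n-s)$, so that H\"older produces exactly $\int_\Omega(d+|u|)^{s^*}dx$, and $d=\|u\|_{L^{s^*}(\Omega)}$. It closes with Sobolev in $W^{1,s}_0$ and Young's inequality plus absorption, which is legitimate since $u\in W^{1,p}_0$.

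The paper's argument is shorter and needs no level-set bookkeeping. It treats $p=n$ with no extra case, since it only uses Sobolev for $s<n$. It is also precisely the estimate recycled in Step 2 of the proof of Theorem \ref{main.thm} for $u_j-w_{i,j}$. Your route yields the stronger Marcinkiewicz bound $Du\in L^{n(p-1)/(n-1),\infty}$ mentioned in Remark (iv) of the introduction, and it avoids absorption. The cost is the separate treatment of $p=n$.

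Two side remarks in your last paragraphs are inaccurate, though they do not affect the main argument.
\begin{itemize}
\item The balancing choice is $k=[|\mu|(\Omega)]^{1/(n-1)}\lambda^{(n-p)/(n-1)}$, not $k\sim\lambda^{p-1}/|\mu|(\Omega)$. The latter leaves a term $\lambda^{-1}$, which is not integrable against $\lambda^{s-1}\,d\lambda$ at infinity when $s\ge 1$.
\item The ``alternative'' with the fixed weight $(1+|u|)^{-\theta}$ does not give the same result. With $\theta=n(p-s)/(n-s)$, H\"older, Sobolev and Young yield only $\int_\Omega|Du|^s\,dx\le c\bigl([|\mu|(\Omega)]^{s/p}+[|\mu|(\Omega)]^{(n-s)/(n-p)}\bigr)$ for $p<n$, which is not homogeneous. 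To get the exact power $s/(p-1)$ you must keep a free parameter $d$ in $(d+|u|)^{-\theta}$ and choose it comparable to $\|u\|_{L^{s^*}(\Omega)}$, which is exactly the paper's device.
\end{itemize}
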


\begin{proof}
We first obtain a weighted type energy estimate. 
For numbers $d>0$ and $\xi >1$, we set 
$\eta_{\pm} \coloneqq d^{1-\xi} - (d+u_{\pm})^{1-\xi}$.
Testing \eqref{main.eq} with $\eta_{\pm} \in W^{1,H}_{0}(\Omega) \cap L^{\infty}(\Omega)$, we obtain
\begin{equation*}
|I_{\pm}|  \coloneqq \left|(\xi-1)\int_{\Omega}\frac{ A(x,Du) \cdot Du_{\pm} }{(d + u_{\pm})^{\xi}}\,dx \right| 
 = \left|\int_{\Omega}\eta_{\pm}\,d\mu \right| 
 \leq cd^{1-\xi}|\mu|(\Omega).
\end{equation*}
In light of \eqref{mono}, we have
\[ \int_{\Omega}\frac{|Du|^{p}}{(d+|u|)^{\xi}}\,dx \leq \frac{c}{\xi-1}(|I_{+}|+|I_{-}|) \leq c\frac{d^{1-\xi}}{\xi-1}|\mu|(\Omega) \]
for a constant $c\equiv c(n,p,q,\nu,L)$. 
Now, in the above display, we choose 
\[ \xi = \frac{n(p-s)}{n-s} \;\; \Longleftrightarrow \;\; \frac{\xi s}{p-s} = \frac{ns}{n-s} = s^{*} , \]
which is admissible due to \eqref{exp.range}, and 
\[ d = \left(\int_{\Omega}|u|^{s^{*}}\,dx \right)^{1/s^*} \leq c\left(\int_{\Omega}|Du|^{s}\,dx\right)^{1/s}. \]
We may assume that $d>0$, otherwise $u \equiv 0$ in $\Omega$ and there is nothing to prove. We then proceed as 
\begin{equation*}
\begin{aligned}
\int_{\Omega}|Du|^{s}\,dx 
& \leq \left( \int_{\Omega}\frac{|Du|^{p}}{(d+|u|)^{\xi}}\,dx\right)^{s/p}
\left( \int_{\Omega}(d+|u|)^{s^{*}}\,dx\right)^{(p-s)/p} \\
& \leq c\left(|\mu|(\Omega)d^{1-\xi}\right)^{s/p} d^{\xi s/p} \\
& \leq c[|\mu|(\Omega)]^{s/p}\left(\int_{\Omega}|Du|^{s}\,dx\right)^{1/p} \\
& \leq c[|\mu|(\Omega)]^{s/(p-1)} + \frac{1}{2}\int_{\Omega}|Du|^{s}\,dx.
\end{aligned}
\end{equation*}
Reabsorbing the last term in the right-hand side into the left-hand side, we obtain \eqref{globalest}.
\end{proof}

\begin{proposition}\label{prop.existence}
Under assumptions \eqref{growth}, \eqref{a.holder}, \eqref{rate.sola} and \eqref{sharp.range}, there exists a SOLA $u$ to \eqref{main.eq} satisfying $u \in W^{1,s}_{0}(\Omega)$ for any $s$ satisfying \eqref{exp.range}.
\end{proposition}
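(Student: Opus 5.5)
We follow the classical Boccardo--Gallou\"et scheme, using Lemma~\ref{global.est} as the source of uniform bounds.

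\textbf{Approximating problems.} Fix a standard mollifier $\phi_{\varepsilon}$ and set $\mu_{j} \coloneqq (\mu * \phi_{1/j})|_{\Omega}$, where $\mu$ is extended by zero outside $\Omega$. Then $\mu_{j}\in L^{\infty}(\Omega)$ and $|\mu_{j}|(\Omega)\le|\mu|(\mathbb{R}^{n})=|\mu|(\Omega)$. Moreover $\mu_{j}\rightharpoonup\mu$ weakly* in the sense of measures. Property \eqref{muj.conv} follows from $|\mu_{j}|(B)\le|\mu|(B+B_{1/j})$ together with $\bigcap_{j}(B+B_{1/j})=\bar B$.

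Since $\mu_{j}\in L^{\infty}(\Omega)\subset (W^{1,H}_{0}(\Omega))^{*}$, the operator $w\mapsto -\mathrm{div}\,A(x,Dw)$ is monotone by \eqref{mono}, coercive, and hemicontinuous on the reflexive space $W^{1,H}_{0}(\Omega)$. Minty--Browder therefore gives a unique weak solution $u_{j}\in W^{1,H}_{0}(\Omega)$ of \eqref{approx.prob}.

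Lemma~\ref{global.est} then yields, for every $s$ in \eqref{exp.range}, the uniform bound $\|Du_{j}\|_{L^{s}}\le c[|\mu|(\Omega)]^{1/(p-1)}$. Since $p>2-1/n$, we may take $s>1$.

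\textbf{Strong convergence of gradients.} This is the main step. We show $\{u_{j}\}$ is Cauchy in $W^{1,1}_{0}(\Omega)$.

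\emph{Truncated energy estimate.} Test the difference of the equations for $u_{j}$ and $u_{k}$ with $T_{t}(u_{j}-u_{k})$, where $T_{t}(\sigma)=\max\{-t,\min\{t,\sigma\}\}$. By \eqref{mono2},
\[
\int_{\{|u_{j}-u_{k}|<t\}}|Du_{j}-Du_{k}|^{p}\,dx\le t\,\|\mu_{j}-\mu_{k}\|_{L^{1}}\le 2t|\mu|(\Omega).
\]

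\emph{Compactness.} The uniform $W^{1,s}$ bound and Rellich give $u_{j}\to u$ in $L^{s}$ and a.e., along a subsequence. Hence the measure of $\{|u_{j}-u_{k}|\ge t\}$ tends to zero for each fixed $t$.

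\emph{Splitting.} Split $\int|Du_{j}-Du_{k}|\,dx$ over the sets $\{|u_{j}-u_{k}|<t\}$ and $\{|u_{j}-u_{k}|\ge t\}$. On the first set use H\"older and the truncated estimate, giving a bound $c\,t^{1/p}$. On the second set use H\"older with the uniform $L^{s}$ bound, giving $c\,|\{|u_{j}-u_{k}|\ge t\}|^{1-1/s}$. Choosing $t$ small first and then $j,k$ large shows $Du_{j}\to Du$ in $L^{1}$. Interpolating with the uniform $L^{s}$ bound upgrades this to $L^{\tilde s}$ for every $\tilde s<s$. Since $s$ can be taken arbitrarily close to $n(p-1)/(n-1)$, we get $u\in W^{1,s}_{0}(\Omega)$ for all $s$ in \eqref{exp.range}, and also $Du_{j}\to Du$ a.e.

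\textbf{Passing to the limit.} It remains to show $A(x,Du_{j})\to A(x,Du)$ in $L^{1}$, which makes $u$ a distributional solution and hence a SOLA. By \eqref{growth}, $|A(x,Du_{j})|\le L(|Du_{j}|^{p-1}+\|a\|_{L^{\infty}}|Du_{j}|^{q-1})$.

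The $(p-1)$-term is fine: $p-1<s$, so it is uniformly integrable. The $(q-1)$-term is where assumption \eqref{rate.sola} enters. Since $q-1<(p-1)\frac{n-1+\alpha}{n-1}\le (p-1)\frac{n}{n-1}$, we can choose $s\in\bigl(q-1,\,n(p-1)/(n-1)\bigr)$. Then $|Du_{j}|^{q-1}$ is bounded in $L^{s/(q-1)}$ with $s/(q-1)>1$, hence uniformly integrable. Vitali's theorem and the a.e.\ convergence of $Du_{j}$ then give the $L^{1}$ convergence of $A(x,Du_{j})$.

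Letting $j\to\infty$ in the weak formulation, with $\int\varphi\,d\mu_{j}\to\int\varphi\,d\mu$ by weak* convergence, completes the argument. The main obstacle is the a.e.\ convergence of gradients in the previous step; the control of $|Du_{j}|^{q-1}$ here needs only the weak consequence $q-1<n(p-1)/(n-1)$ of \eqref{rate.sola}.
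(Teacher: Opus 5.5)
Your overall scheme is the same as the paper's. You mollify $\mu$, solve the regularized problems in $W^{1,H}_0(\Omega)$ by monotonicity, and get uniform $W^{1,s}$ bounds from Lemma~\ref{global.est}. You then obtain strong and a.e.\ convergence of the gradients by the Boccardo--Gallou\"et truncation argument, which the paper simply cites from \cite{BG89}. Finally you pass to the limit in $A(x,Du_j)$ by Vitali's theorem, using $q-1<n(p-1)/(n-1)$, i.e.\ \eqref{q-1.admissible}.

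\textbf{The gap.} Your truncated energy estimate does not hold as written. You invoke \eqref{mono2}, but that inequality is only available for $p\ge 2$, and the paper states it only in that case. Proposition~\ref{prop.existence}, however, is formulated under \eqref{sharp.range}, which includes $2-1/n<p<2$. For such $p$, \eqref{mono2} is false. At a point where $a=0$, take $|z_2|=1$ and $z_1=z_2+\varepsilon e$. The left-hand side of \eqref{mono2} is $\varepsilon^{p}$, while the right-hand side is of order $\varepsilon^{2}$, and $\varepsilon^{p-2}\to\infty$ as $\varepsilon\to 0$. So your bound $\int_{\{|u_j-u_k|<t\}}|Du_j-Du_k|^p\,dx\le c\,t$ is not justified in the subquadratic part of the range.

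\textbf{The repair.} Use \eqref{mono} instead, which holds for all $p$. Testing with $T_t(u_j-u_k)$ gives
\[
\int_{\{|u_j-u_k|<t\}}(|Du_j|+|Du_k|)^{p-2}|Du_j-Du_k|^{2}\,dx\le c\,t\,|\mu|(\Omega).
\]
By the Cauchy--Schwarz inequality,
\[
\int_{\{|u_j-u_k|<t\}}|Du_j-Du_k|\,dx\le \bigl(c\,t\,|\mu|(\Omega)\bigr)^{1/2}\left(\int_{\Omega}(|Du_j|+|Du_k|)^{2-p}\,dx\right)^{1/2}.
\]
The last factor is bounded uniformly in $j,k$, because $2-p<1<s$, $\Omega$ is bounded, and $Du_j$ is bounded in $L^s$. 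This replaces your bound $c\,t^{1/p}$ by $c\,t^{1/2}$, and your splitting argument then goes through unchanged. For $p\ge 2$ your version is fine, since there \eqref{mono2} follows from \eqref{mono}.

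\textbf{Smaller points.}
\begin{itemize}
\item The truncated estimate carries the constant from the monotonicity inequality, which you dropped.
\item The a.e.\ convergence of $Du_j$ follows from the $L^1$ convergence only along a further subsequence. You should relabel $\mu_j$ along that same subsequence.
\end{itemize}
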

\begin{proof}
We set $\mu_{j} \coloneqq \mu \ast \phi_{j}$ for each $j \in \mathbb{N}$, with $\{\phi_{j}\}$ being a sequence of standard mollifiers. Then the sequence $\{\mu_{j}\} \subset L^{\infty}(\Omega)$ converges to $\mu$ weakly* in the sense of measures and satisfies \eqref{muj.conv} as well as
\[ |\mu_j|(\Omega) \le |\mu|(\Omega). \] 
We accordingly consider the sequence $\{u_{j}\}$ of weak solutions to \eqref{approx.prob}; the existence of such $u_{j}$ can be proved via a monotonicity method in Musielak--Orlicz spaces (see for instance \cite[Lemma~3.4]{BL22}). 
Applying Lemma~\ref{global.est} to each $u_{j}$, we see that $\{u_{j}\}$ is bounded in $W^{1,s}_{0}(\Omega)$ for any $s$ satisfying \eqref{exp.range}. Accordingly, there exists a function $u \in W^{1,1}_{0}(\Omega)$ such that $u_{j} \rightharpoonup u$ in $W^{1,s}_{0}(\Omega)$ and $u_{j} \to u$ in $L^{s}(\Omega)$ (up to a subsequence). 
Then, by using the truncation and compactness methods in \cite[Theorem~1]{BG89}, we have that $Du_{j} \to Du$ in $L^{s}(\Omega)$ and a.e. in $\Omega$. Finally, using the fact that
\begin{equation}\label{q-1.admissible}
\eqref{rate.sola}, \, \alpha \le 1 \;\; \Longrightarrow \;\; \frac{q-1}{p-1} < 1+\frac{1}{n-1} \;\; \Longleftrightarrow \;\; q-1 < \frac{n(p-1)}{n-1}, 
\end{equation}
along with Vitali's convergence theorem, we conclude that $A(x,Du_{j}) \to A(x,Du)$ in $L^{1}(\Omega)$ and $u$ is a distributional solution to $\eqref{main.eq}_1$. This completes the proof. 
\end{proof}

\section{Regularity for reference problems}\label{sec3}
In this section, we investigate several regularity results for homogeneous equations related to \eqref{main.eq}. 
We fix a ball $B_{r} \Subset \Omega$ with $r \le 1$,  and consider 
\begin{equation}\label{homogeneous}
-\mathrm{div}\,A(x,Dw) = 0 \quad \text{in}\;\; B_{r}
\end{equation}

\subsection{Reverse H\"older estimates in the $p$-phase}

In this subsection, we establish reverse H\"older estimates for \eqref{homogeneous} under the assumption
\begin{equation}\label{p.phase.4r}
\sup_{x \in B_{r}}a(x) \le \bar{L}[a]_{\alpha}r^{\alpha}
\end{equation}
for some $\bar{L} \in [8,\infty)$.

Here, we set
\begin{equation}\label{def.kappa}
\kappa \coloneqq \frac{1}{2}\left[ \max\left\{ \frac{n(q-p)}{\alpha}, \frac{n(n+\alpha)(p-1)}{n(n-1) + (n+\alpha)(p-1)} \right\} + \frac{n(p-1)}{n-1} \right].
\end{equation}
Since $p \geq 2$, \eqref{def.kappa} directly implies
\[ \kappa \geq \frac{1}{2} \left[ \frac{n(p-1)}{n+p-2} + \frac{n(p-1)}{n-1} \right] \geq \frac{1}{2} \left[ 1 + \frac{n}{n-1} \right] > 1. \]
Moreover, in light of the facts that
\begin{equation*}
\eqref{rate.sola} \;\; \Longleftrightarrow \;\; \frac{n(q-p)}{\alpha} < \frac{n(p-1)}{n-1} 
\end{equation*}
and that
\begin{equation*}
\left( \frac{n(n+\alpha)(p-1)}{n(n-1) + (n+\alpha)(p-1)} < \frac{n(p-1)}{n-1} \right),
\end{equation*}
we have
\begin{equation}\label{kappa.range} 
\kappa \in \left( \max\left\{ \frac{n(q-p)}{\alpha}, \frac{n(n+\alpha)(p-1)}{n(n-1) + (n+\alpha)(p-1)} \right\} ,\frac{n(p-1)}{n-1}\right). 
\end{equation}

We now define a continuous injective function $m: [n(p-1)/(n-1), p] \to \mathbb{R}$ by 
\begin{align}\label{def.m}
m(\bar p) = (\bar p + (q-p))_{*} \qquad \text{for }\; \bar{p} \in [n(p-1)/(n-1),p] \end{align}
and set $ \xi \coloneqq p - \bar{p} \in [0, (n-p)/(n-1)]$ for $\bar{p} \in [n(p-1)/(n-1),p]$. Observe that, since $2 \leq p \le n$, 
\begin{equation*}
\begin{aligned}
1+\xi < 2 \le p \;\; & \Longrightarrow \;\; \frac{n-p+\xi}{n-1} < 1 \le p-1 < \frac{(p-\xi)^{2}}{p-1} \\
& \Longrightarrow \;\; \frac{(p-1)(n-p+\xi)}{n-1} < (p-\xi)^{2} \\
& \Longrightarrow \;\; np-p^{2}+p\xi + \frac{(p-1)(n-p+\xi)}{n-1} < np-p\xi+\xi^{2} \\
\overset{\eqref{rate.sola}}&{\Longrightarrow} \;\; q < p+\frac{\alpha(p-1)}{n-1} \le p + \frac{p-1}{n-1} < \frac{np-p\xi+\xi^{2}}{n-p+\xi} \\
& \Longrightarrow \;\; q < \frac{np-p\xi+\xi^{2}}{n-p+\xi} = \xi + \frac{np-n\xi}{n-p+\xi} = \xi + (p-\xi)^{*} 
\end{aligned}
\end{equation*}
for any $\xi = p - \bar{p} \in [0,(n-p)/(n-1)]$. In other words, 
\begin{align}\label{m.cont}
m(\bar p) < \bar p \qquad \text{for any}\;\; \bar p \in [n(p-1)/(n-1), p].
\end{align}
Also note that
\begin{equation}\label{m.bd}
\begin{aligned}
m \left( \frac{n(p-1)}{n-1} \right) \overset{\eqref{rate.sola}}{<} \left( \frac{(n+\alpha)(p-1)}{n-1} \right)_{*} = \frac{n(n+\alpha)(p-1)}{n(n-1) + (n+\alpha)(p-1)} \overset{\eqref{kappa.range}} {<} \kappa.
\end{aligned}
\end{equation}
Due to the fact that $\bar{p} \mapsto m(\bar p)/\bar p$ is continuous on $[n(p-1)/(n-1),p]$, along with \eqref{m.cont} and \eqref{m.bd}, we can choose a number $N_{0} \equiv N_{0}(n,p,q,\alpha) \in \mathbb{N} \cup \{ 0 \}$ such that
\[ m^{N_{0}+1}(p) \leq \kappa < m^{N_{0}}(p). \]

We first obtain a preliminary version of reverse H\"older type estimates.

\begin{lemma}\label{lem.self.1}
Let $w$ be a weak solution to \eqref{homogeneous} under assumptions \eqref{growth}, \eqref{p.bound}, \eqref{a.holder}, and \eqref{rate.sola}. 
Assume that $B_r$ satisfies \eqref{p.phase.4r}  for some $\bar{L} \in [8 , \infty)$.
For any $1/2 \le \theta_{2} < \theta_{1} \le 1$ and $\bar{p} \in [n(p-1)/(n-1), p]$, we have
\begin{equation*}
\begin{aligned}
 \left( \mean{B_{\theta_{2}r}}|Dw|^{\bar p}\,dx \right)^{1/\bar p}
& \le c \left[ 1+ r^{\alpha} \left(\mean{B_{\theta_{1}r}}|Dw|^{m(\bar p)}\,dx \right)^{(q-p)/m(\bar p)} \right] \left(\mean{B_{\theta_{1}r}}|Dw|^{m(\bar p)}\,dx \right)^{1/m(\bar p)}
\end{aligned}
\end{equation*}
for a constant $c\equiv c(n,p,q,\nu,L, \|a\|_{L^{\infty}}, \alpha, [a]_{\alpha}, \bar p, \bar{L}, \theta_{1}-\theta_{2})$.
\end{lemma}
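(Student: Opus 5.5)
The plan is to run a weighted Caccioppoli inequality of the type used in the proof of Lemma~\ref{global.est}, localized with a cut-off. The weight lets us reach the exponent $\bar p\le p$ rather than $p$. Then the usual (not Musielak--Orlicz) Sobolev--Poincar\'e inequality lowers the integrability on the right-hand side down to $m(\bar p)=(\bar p+q-p)_*$.

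\textbf{Setup.} Set $\xi\coloneqq p-\bar p\in[0,(n-p)/(n-1)]$, $v\coloneqq w-(w)_{B_{\theta_1 r}}$, and
\[ M\coloneqq \Big(\mean{B_{\theta_1 r}}|Dw|^{m(\bar p)}dx\Big)^{1/m(\bar p)} ,\qquad d\coloneqq \Big(\mean{B_{\theta_1 r}}|v|^{\bar p+q-p}dx\Big)^{1/(\bar p+q-p)}. \]
We may assume $d>0$. By \eqref{m.cont} and the definition of $m$, $m(\bar p)\ge1$ and $\bar p+q-p\le m(\bar p)^*$. Hence the Sobolev--Poincar\'e inequality in $W^{1,m(\bar p)}$ gives $d\le c\,rM$.

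\textbf{Weighted energy estimate.} Take a cut-off $\eta\in C^\infty_0(B_{\theta_1 r})$ with $\eta\equiv1$ on $B_{\theta_2 r}$ and $|D\eta|\le c/((\theta_1-\theta_2)r)$. Test \eqref{homogeneous} with
\[ \varphi=\eta^{q}\big[(d+v_+)^{1-\xi}-d^{1-\xi}\big]-\eta^{q}\big[(d+v_-)^{1-\xi}-d^{1-\xi}\big], \]
and in the case $\xi=0$ with $\eta^q v$ directly. This $\varphi$ lies in $W^{1,H}_0$ because $d>0$ and $|\varphi|\le c|v|$. Using \eqref{mono}, $\eqref{growth}_1$ and Young's inequality to absorb the terms containing $D\eta$, I expect
\[ \int_{B_{\theta_1 r}}\eta^{q}\frac{|Dw|^{p}}{(d+|v|)^{\xi}}dx\le \frac{c}{r^{p}}\int_{B_{\theta_1 r}}(d+|v|)^{\bar p}dx+\frac{c}{r^{q}}\int_{B_{\theta_1 r}}a(x)(d+|v|)^{\bar p+q-p}dx. \]
Here I use $|v|^{p}(d+|v|)^{-\xi}\le(d+|v|)^{\bar p}$ and the analogous bound for the $q$-term. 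The factor $(1-\xi)$ in front of $D\varphi$ is harmless, since $\xi<1$.

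\textbf{Closing the estimate.} In the $a$-term, insert \eqref{p.phase.4r}, $a\le \bar L[a]_\alpha r^\alpha$. By H\"older, $\mean{}(d+|v|)^{\bar p}\le c\,d^{\bar p}$ and $\mean{}(d+|v|)^{\bar p+q-p}\le c\,d^{\bar p+q-p}$. Then write
\[ |Dw|^{\bar p}=\big(|Dw|^{p}(d+|v|)^{-\xi}\big)^{\bar p/p}(d+|v|)^{\xi\bar p/p} \]
and apply H\"older with exponents $p/\bar p$ and $p/\xi$. This yields
\[ \mean{B_{\theta_2 r}}|Dw|^{\bar p}dx\le c\big[r^{-p}d^{\bar p}+r^{\alpha-q}d^{\bar p+q-p}\big]^{\bar p/p}d^{\xi\bar p/p}. \]
Substituting $d\le crM$, the powers of $r$ balance exactly: $\bar p-p+\xi=0$ in the first term, and the second term carries the extra factor $r^{\alpha}M^{q-p}$. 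The result is
\[ \Big(\mean{B_{\theta_2 r}}|Dw|^{\bar p}\Big)^{1/\bar p}\le c\,M\big(1+r^\alpha M^{q-p}\big)^{1/p}\le c\big[1+r^\alpha M^{q-p}\big]M, \]
which is the claim.

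\textbf{Main obstacle.} The delicate step is the weighted test-function computation. One must check carefully that the terms with $D\eta$ can be absorbed despite the weight, and must keep track of the $q$-phase contribution so that exactly $\bar p+q-p$, and no higher power, appears; this is what makes the exponent $m(\bar p)$ come out. This is also where \eqref{m.cont}, which relies on \eqref{rate.sola}, enters, since it guarantees $\bar p+q-p\le m(\bar p)^*$ and hence that the Sobolev--Poincar\'e step is legitimate.
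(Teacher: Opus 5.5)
Your proof is correct and follows essentially the paper's route: a weighted Caccioppoli estimate with weight $(d+|w-(w)_{B_{\theta_1 r}}|)^{-\xi}$, insertion of \eqref{p.phase.4r}, and the ordinary Sobolev--Poincar\'e inequality down to $m(\bar p)$. The differences are only technical: the paper tests with $(d+(w-(w)_{B_{\theta_1 r}})_{\pm})^{1-\xi}\eta^{q}$, recovers $|Dw|^{\bar p}$ via Young rather than H\"older, and lets $d\to 0$ before applying Sobolev--Poincar\'e to each term, where you instead fix $d$ as an $L^{\bar p+q-p}$-mean. One minor correction: $\bar p+q-p\le m(\bar p)^{*}$ holds directly from the definition of $(\cdot)_{*}$, so \eqref{m.cont} is not what legitimizes your Sobolev--Poincar\'e step; it is needed only for the later iteration.
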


\begin{proof}
In this proof, $c$ depends on $n, p, q, \nu, L, \|a\|_{L^{\infty}},\alpha, [a]_{\alpha}, \bar p, \bar{L}$ and $\theta_{1}-\theta_{2}$, where we omit writing the dependency.
Let $\eta \in C^{\infty}_{0}(B_{\theta_{1}r})$ be a cut-off function satisfying $\eta \equiv 1$ in $B_{\theta_{2} r}$ and $|D\eta| \le 4/[(\theta_{1} - \theta_{2})r]$.
We simply write $\xi = p - \bar p \in [0, 1)$, $B_{\rho}^{+} \coloneqq \{ x \in B_{\rho} : w(x) - (w)_{B_{\theta_{1}r}} \geq 0 \}$, and $B_{\rho}^{-} \coloneqq B_{\rho} \setminus B_{\rho}^{+}$ for any $\rho \in (0, \theta_{1} r]$.
Testing \eqref{homogeneous} with $\varphi \equiv (d+(w-(w)_{B_{\theta_{1} r}})_{+})^{1-\xi}\eta^{q}$ for any $d>0$, we have
\begin{equation*}
\begin{aligned}
& \int_{B^{+}_{\theta_{1} r}}\left[\frac{|Dw|}{(d+(w-(w)_{B_{\theta_{1} r}})_{+})^{\xi/p}}\right]^{p}\eta^{q} + a(x)\left[\frac{|Dw|}{(d+(w-(w)_{B_{\theta_{1} r}})_{+})^{\xi/q}}\right]^{q}\eta^{q}\,dx \\
& \le c \int_{B_{\theta_{1} r}} ( |Dw|^{p-1} + a(x) |Dw|^{q-1}) \frac{(d + (w - (w)_{B_{\theta_{1}r}})_{+})^{1 - \xi}}{(1 - \xi)(\theta_{1} - \theta_{2}) r} \eta^{q-1} \, dx \\
& \le c \frac{d^{1- \xi}}{(1 - \xi)(\theta_{1} - \theta_{2}) r} \int_{B^{-}_{\theta_{1} r}} |Dw|^{p-1} + a(x) |Dw|^{q-1} \dx \\
& \quad + c \int_{B^{+}_{\theta_{1} r}} \Bigg\{ \left[ \frac{|Dw|}{(d + (w - (w)_{B_{\theta_{1} r}})_{+})^{\xi/p}} \right]^{p-1} \frac{(d+(w-(w)_{B})_{+})^{1-\xi/p}}{(1-\xi)(\theta_{1} - \theta_{2})r} \eta^{q-1} \\
& \hspace{2.5cm} + a(x) \left[ \frac{|Dw|}{(d + (w - (w)_{B_{\theta_{1} r}})_{+})^{\xi/q}} \right]^{q - 1} \frac{(d+(w-(w)_{B})_{+})^{1-\xi/q}}{(1-\xi)(\theta_{1} - \theta_{2})r} \eta^{q-1} \Bigg\} \, dx.
\end{aligned}
\end{equation*}
Then Young's inequality gives
\begin{equation*}
\begin{aligned}
& \int_{B^{+}_{\theta_{1} r}}\left[\frac{|Dw|}{(d+(w-(w)_{B_{\theta_{1} r}})_{+})^{\xi/p}}\right]^{p}\eta^{q} + a(x)\left[\frac{|Dw|}{(d+(w-(w)_{B_{\theta_{1} r}})_{+})^{\xi/q}}\right]^{q}\eta^{q}\,dx \\
& \le c \int_{B^{+}_{\theta_{1} r}}\frac{(d+(w-(w)_{B_{\theta_{1}r}})_{+})^{p-\xi}}{[(1-\xi)(\theta_{1} - \theta_{2})r]^{p}} + a(x)\frac{(d+(w-(w)_{B_{\theta_{1} r}})_{+})^{q-\xi}}{[(1-\xi)(\theta_{1} - \theta_{2})r]^{q}}\,dx \\
& \quad + c \frac{d^{1- \xi}}{(1 - \xi)(\theta_{1} - \theta_{2}) r} \int_{B^{-}_{\theta_{1} r}} |Dw|^{p-1} + a(x) |Dw|^{q-1} \dx.
\end{aligned}
\end{equation*}
In a completely similar way, this time testing \eqref{homogeneous} with $\varphi \equiv (d+(w-(w)_{B_{\theta_{1} r}})_{-})^{1-\xi}\eta^{q}$, we also have
\begin{equation*}
\begin{aligned}
& \int_{B^{-}_{\theta_{1} r}}\left[\frac{|Dw|}{(d+(w-(w)_{B_{\theta_{1} r}})_{-})^{\xi/p}}\right]^{p}\eta^{q} + a(x)\left[\frac{|Dw|}{(d+(w-(w)_{B_{\theta_{1} r}})_{-})^{\xi/q}}\right]^{q}\eta^{q}\,dx \\
& \le c \int_{B^{-}_{\theta_{1} r}}\frac{(d+(w-(w)_{B_{\theta_{1}r}})_{-})^{p-\xi}}{[(1-\xi)(\theta_{1} - \theta_{2})r]^{p}} + a(x)\frac{(d+(w-(w)_{B_{\theta_{1} r}})_{-})^{q-\xi}}{[(1-\xi)(\theta_{1} - \theta_{2})r]^{q}}\,dx \\
& \quad + c \frac{d^{1- \xi}}{(1 - \xi)(\theta_{1} - \theta_{2}) r} \int_{B^{+}_{\theta_{1} r}} |Dw|^{p-1} + a(x) |Dw|^{q-1} \dx.
\end{aligned}
\end{equation*}
Combining the last two displays, we obtain
\begin{equation*}
\begin{aligned}
& \int_{B_{\theta_{2} r}} \left[\frac{|Dw|}{[r^{-1}(d+|w-(w)_{B_{\theta_{1} r}}|)]^{\xi/p}}\right]^{p}\,dx \\
& \le r^{\xi}\int_{B_{\theta_{2} r}} \left[\frac{|Dw|}{(d+|w-(w)_{B_{\theta_{1}r}}|)^{\xi/p}}\right]^{p} + a(x)\left[\frac{|Dw|}{(d+|w-(w)_{B_{\theta_{1}r}}|)^{\xi/q}}\right]^{q}\,dx \\
& \le \frac{c}{(1-\xi)^{q}(\theta_{1}-\theta_{2})^{q}}\int_{B_{\theta_{1} r}}\left[\frac{d+|w-(w)_{B_{\theta_{1} r}}|}{r}\right]^{p-\xi} + a(x)\left[\frac{d+|w-(w)_{B_{\theta_{1}r}}|}{r}\right]^{q-\xi}\,dx \\
& \quad + \frac{c}{(1-\xi)(\theta_{1}-\theta_{2})}\left(\frac{d}{r}\right)^{1-\xi}\int_{B_{\theta_{1} r}}|Dw|^{p-1}+a(x)|Dw|^{q-1}\,dx,
\end{aligned}
\end{equation*}
which in turn gives
\begin{equation*}
\begin{aligned}
& \int_{B_{\theta_{2} r}} |Dw|^{p-\xi}\,dx 
 = \int_{B_{\theta_{2} r}} \left[\frac{|Dw|}{[r^{-1}(d+|w-(w)_{B_{\theta_{1} r}}|)]^{\xi/p}} \right]^{p-\xi}\left[ \frac{(d+|w-(w)_{B_{\theta_{1} r}}|)}{r} \right]^{\xi(p-\xi)/p}\,dx \\
& \le \int_{B_{\theta_{2} r}}\left[ \frac{|Dw|}{r^{-1}(d+|w-(w)_{B_{\theta_{1} r}}|)^{\xi/p}}\right]^{p} + \left[ \frac{d+|w-(w)_{B_{\theta_{1} r}}|}{r}\right]^{p-\xi}\,dx \\
& \le \frac{c}{(1-\xi)^{q}(\theta_{1}-\theta_{2})^{q}}\int_{B_{\theta_{1} r}} \left[ \frac{d+|w-(w)_{B_{\theta_{1} r}}|}{r} \right]^{p-\xi} + a(x) \left[ \frac{d+|w-(w)_{B_{\theta_{1}r}}|}{r} \right]^{q-\xi}\,dx \\
& \quad + \frac{c}{(1-\xi)(\theta_{1}-\theta_{2})}\left(\frac{d}{r}\right)^{1-\xi}\int_{B_{\theta_{1} r}}|Dw|^{p-1}+a(x)|Dw|^{q-1}\,dx.
\end{aligned}
\end{equation*}
Then, by using \eqref{p.phase.4r}, we have
\begin{equation*}
\begin{aligned}
\int_{B_{\theta_2}r}|Dw|^{p-\xi}\,dx 
& \le \frac{c}{(1-\xi)^{q}(\theta_{1}-\theta_{2})^{q}}\int_{B_{\theta_{1} r}} \left[ \frac{|w-(w)_{B_{\theta_{1} r}}|}{r}\right]^{p-\xi} + r^{\alpha} \left[\frac{|w-(w)_{B_{\theta_{1} r}}|}{r}\right]^{q-\xi}\,dx \\
& \quad + \frac{c}{(1-\xi)(\theta_{1}-\theta_{2})}\left(\frac{d}{r}\right)^{1-\xi}\int_{B_{\theta_{1}r}}|Dw|^{p-1}+a(x)|Dw|^{q-1}\,dx \\
& \quad + \frac{c}{(1-\xi)^{q}(\theta_{1}-\theta_{2})^{q}}\left[ \left(\frac{d}{r}\right)^{p-\xi} + r^{\alpha} \left(\frac{d}{r}\right)^{q-\xi} \right],
\end{aligned}
\end{equation*}
Letting $d\to0$, taking averages and then applying the usual Sobolev--Poincar\'e inequalities, we arrive at
\begin{equation*}
\begin{aligned}
& \mean{B_{\theta_{2}r}}|Dw|^{p-\xi}\,dx
 \le \frac{c}{(1- \xi)^{q}(\theta_{1}-\theta_{2})^{q}} \mean{B_{\theta_{1} r}} \left[ \frac{|w-(w)_{B_{\theta_{1} r}}|}{r} \right]^{p-\xi} + r^{\alpha} \left[ \frac{|w-(w)_{B_{\theta_{1} r}}|}{r} \right]^{q-\xi} \,dx \\
& \le \frac{c}{(1-\xi)^{q}(\theta_{1}-\theta_{2})^{q}}\left[ \left(\mean{B_{\theta_{1}r}}|Dw|^{(p-\xi)_{*}}\,dx \right)^{(p-\xi)/(p-\xi)_{*}} + r^{\alpha} \left(\mean{B_{\theta_{1}r}}|Dw|^{m(\bar p)}\,dx \right)^{(q-\xi)/m(\bar p)} \right],
\end{aligned}
\end{equation*}
where we have also used the fact that $\theta_1 / \theta_2 \le 2$. 
Then an application of H\"older's inequality completes the proof.
\end{proof}

\begin{lemma}\label{lem.selfaux}
For $\kappa$ given in \eqref{def.kappa}, any fixed $1/2 \leq \theta_{3} < \theta_{2} < \theta_{1} \leq 1$, $\kappa \leq p_{1} < p_{2} < p_{3} \leq p$, and $r \in (0, 1]$, assume that $w \in W^{1,p}(B_{r})$ satisfies
\begin{align}\label{lem.selfaux.1}
\left( \mean{B_{\theta_{3} r}} |Dw|^{p_{3}} \, dx \right)^{1/p_{3}} \leq \bar c \left[ 1+ r^{\alpha} \left( \mean{B_{\theta_{2} r}} |Dw|^{p_{2}} \, dx \right)^{(q-p)/p_{2}} \right] \left( \mean{B_{\theta_{2} r}} |Dw|^{p_{2}} \, dx \right)^{1/p_{2}}
\end{align}
and
\begin{align}\label{lem.selfaux.2}
\left( \mean{B_{\theta_{2} r}} |Dw|^{p_{2}} \, dx \right)^{1/p_{2}} \leq M \left( \mean{B_{\theta_{1} r}} |Dw|^{p_{1}} \, dx \right)^{1/p_{1}},
\end{align}
where $\bar c$ and $M$ are positive constants.
Then we have
\begin{align*}
\left( \mean{B_{\theta_{3} r}} |Dw|^{p_{3}} \, dx \right)^{1/p_{3}} \leq M_{\mathrm{iter}} \left( \mean{B_{\theta_{1} r}} |Dw|^{p_{1}} \, dx \right)^{1/p_{1}},
\end{align*}
where $M_{\mathrm{iter}}$ is a constant depending only on $\bar c, M, n, p, q,  \|Dw\|_{L^{p_1}(B_{r})}$ and is an increasing function with respect to $M$ and $\| Dw \|_{L^{p_{1}}(B_{r})}$.
\end{lemma}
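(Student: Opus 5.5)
Substituting \eqref{lem.selfaux.2} into \eqref{lem.selfaux.1} settles the linear part immediately. The nonlinear factor $r^{\alpha}(\cdots)^{(q-p)/p_2}$ is the only difficulty: we have to show it is bounded by a constant depending only on the listed quantities. Throughout, write $I_j \coloneqq (\mean{B_{\theta_j r}}|Dw|^{p_j}\,dx)^{1/p_j}$.

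\emph{Step 1 (chaining).} Inserting \eqref{lem.selfaux.2} into \eqref{lem.selfaux.1} gives
\[
I_3 \le \bar c\,\bigl[1 + r^{\alpha}(M I_1)^{q-p}\bigr] M I_1 .
\]
It therefore suffices to bound $r^{\alpha} I_1^{q-p}$.

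\emph{Step 2 (scaling bound for $r^{\alpha}I_1^{q-p}$).} Since $\theta_1 \ge 1/2$, we have $|B_{\theta_1 r}| \ge 2^{-n}|B_r| = c(n)r^n$. Hence
\[
I_1 \le c(n)\, r^{-n/p_1}\|Dw\|_{L^{p_1}(B_r)},
\qquad\text{so}\qquad
r^{\alpha} I_1^{q-p} \le c(n,p,q)\, r^{\alpha - n(q-p)/p_1}\,\|Dw\|_{L^{p_1}(B_r)}^{q-p}.
\]
This is where the hypothesis $p_1 \ge \kappa$ enters. By \eqref{kappa.range} we have $\kappa > n(q-p)/\alpha$, so $p_1 > n(q-p)/\alpha$. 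This is equivalent to $\alpha - n(q-p)/p_1 > 0$. Because $r \le 1$, the power of $r$ is at most $1$, and so
\[
r^{\alpha} I_1^{q-p} \le c\,\|Dw\|_{L^{p_1}(B_r)}^{q-p}.
\]
The constant $c$ depends only on $n,p,q$; any $\alpha$-dependence can be removed because the exponent of $r$ is nonnegative and $r\le 1$.

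\emph{Step 3 (conclusion).} Combining Steps 1 and 2,
\[
M_{\mathrm{iter}} \coloneqq \bar c\, M\,\bigl[1 + c\, M^{q-p}\|Dw\|_{L^{p_1}(B_r)}^{q-p}\bigr].
\]
This quantity visibly depends only on $\bar c, M, n, p, q$ and $\|Dw\|_{L^{p_1}(B_r)}$. It is increasing in $M$ and in $\|Dw\|_{L^{p_1}(B_r)}$ because $q>p$.

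The only delicate point is Step 2: the factor $r^{\alpha}$ must absorb the $r^{-n(q-p)/p_1}$ blow-up produced by converting an average into an unaveraged norm. That is exactly why $\kappa$ was chosen to exceed $n(q-p)/\alpha$ in \eqref{def.kappa}. The condition $\theta_1\ge 1/2$ is used only to compare $|B_{\theta_1 r}|$ with $|B_r|$.
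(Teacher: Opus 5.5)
Your proof is correct and follows the paper's argument: you chain \eqref{lem.selfaux.2} into \eqref{lem.selfaux.1}, convert the average over $B_{\theta_1 r}$ into $r^{-n/p_1}\|Dw\|_{L^{p_1}(B_r)}$, and use $p_1\ge\kappa>n(q-p)/\alpha$ so that $r^{\alpha-n(q-p)/p_1}\le 1$. Your version is slightly more careful than the paper's write-up, since you keep the exponent $q-p$ on $\|Dw\|_{L^{p_1}(B_r)}$ and explicitly discard the $r$-factor, which makes $M_{\mathrm{iter}}$ genuinely independent of $r$.
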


\begin{proof}
Combining \eqref{lem.selfaux.1} and \eqref{lem.selfaux.2}, we obtain
\begin{equation*}
\begin{aligned}
\left( \mean{B_{\theta_{3} r}} |Dw|^{p_{3}} \, dx \right)^{1/p_{3}} 
& \leq \bar c M \left[ 1+ M^{q-p} r^{\alpha} \left( \mean{B_{\theta_{1} r}} |Dw|^{p_{1}} \, dx \right)^{(q-p)/p_{1}} \right] \left( \mean{B_{\theta_{1} r}} |Dw|^{p_{1}} \, dx \right)^{1/p_{1}} \\
& \leq \bar c M \left( 1+ M^{q-p} r^{\alpha - n(q-p)/p_{1}} \|Dw\|_{L^{p_{1}}(B_{r})} \right) \left( \mean{B_{\theta_{1} r}} |Dw|^{p_{1}} \, dx \right)^{1/p_{1}} \\
& \eqqcolon M_{\mathrm{iter}}\left( \mean{B_{\theta_{1} r}} |Dw|^{p_{1}} \, dx \right)^{1/p_{1}}.
\end{aligned}
\end{equation*}
Since $ n (q-p)/\alpha < \kappa \le p_{1}$, we have $\alpha - n(q-p)/p_{1}>0$.
Hence, $M_{\mathrm{iter}}$ is an increasing function with respect to $M$ and $\| Dw \|_{L^{p_{1}}(B_{r})}$.
\end{proof}

The next is a refined version of reverse H\"older type inequalities.
\begin{lemma}
Let $w$ be a weak solution to \eqref{homogeneous} under assumptions \eqref{growth}, \eqref{p.bound}, \eqref{a.holder} and \eqref{rate.sola}. 
Assume that $B_{r}$ satisfies \eqref{p.phase.4r} for some $\bar{L} \in [8, \infty)$.
Then for the constant $\kappa$ given in \eqref{def.kappa}, we have
\begin{equation}\label{Lp.Lkappa}
\left( \mean{B_{r/2}}|Dw|^{p}\,dx \right)^{1/p}
\le c \left(\mean{B_{r}}|Dw|^{\kappa}\,dx \right)^{1/\kappa}.
\end{equation}
for a constant $c\equiv c(n,p,q,\nu,L, \|a\|_{L^{\infty}}, \alpha, [a]_{\alpha}, \bar{L}, \|Dw\|_{L^{\kappa}(B_{r})})$.
\end{lemma}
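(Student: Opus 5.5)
We will prove \eqref{Lp.Lkappa} by iterating Lemma~\ref{lem.self.1} a finite number of times and combining the resulting inequalities with Lemma~\ref{lem.selfaux}. Recall that $m^{N_{0}+1}(p) \le \kappa < m^{N_{0}}(p)$. The chain of exponents is
\[ p_{(0)} \coloneqq p > p_{(1)} \coloneqq m(p) > \dots > p_{(N_{0})} \coloneqq m^{N_{0}}(p) > \kappa , \]
and it is decreasing by \eqref{m.cont}. We also need a nested family of radii $1/2 = \theta^{(0)} < \theta^{(1)} < \dots < \theta^{(N_{0}+1)} = 1$, for instance $\theta^{(k)} = 1/2 + k/(2(N_{0}+1))$. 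With these choices all constants depend only on $n,p,q,\alpha$ through $N_{0}$.

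\emph{Step 1: verify the hypotheses of Lemma~\ref{lem.self.1}.} Each exponent $p_{(k)}$ with $k\le N_{0}$ must lie in $[n(p-1)/(n-1),p]$. To check this, we use that $m$ is increasing (it is the composition of $\bar p\mapsto \bar p+(q-p)$ with the increasing map $t\mapsto t_*$). We also use \eqref{m.bd}, which gives $m(n(p-1)/(n-1))<\kappa$. If some $p_{(k)}$ with $k\le N_{0}$ fell below $n(p-1)/(n-1)$, then monotonicity would force $p_{(k+1)}\le m(n(p-1)/(n-1))<\kappa$, which contradicts $p_{(N_0)}>\kappa$. A boundary case needs slight care: when $p_{(k)}$ first drops below $n(p-1)/(n-1)$, we compare against the nearest admissible point. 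So for $k\le N_{0}-1$, applying Lemma~\ref{lem.self.1} with $\bar p = p_{(k)}$ yields \eqref{lem.selfaux.1} with $(p_{3},p_{2}) = (p_{(k)}, p_{(k+1)})$ on the balls $\theta^{(k)}r \subset \theta^{(k+1)}r$. At the last step we use $\bar p = p_{(N_{0})}$, whose $m$-image $m^{N_0+1}(p)\le \kappa$. Hölder's inequality then replaces the average of $|Dw|^{m^{N_0+1}(p)}$ by the average of $|Dw|^{\kappa}$. The factor $1+r^\alpha(\cdot)^{(q-p)/\cdot}$ is monotone in the average, so this replacement is harmless. The result is an inequality of type \eqref{lem.selfaux.1} from $p_{(N_{0})}$ down to $p_{(N_0+1)}' \coloneqq \kappa$.

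\emph{Step 2: iterate downward via Lemma~\ref{lem.selfaux}.} We argue by backward induction. The base case is trivial: $(\mean{B_{\theta^{(N_0+1)} r}}|Dw|^{\kappa})^{1/\kappa}\le M_{0}$ times itself, with $M_{0}=1$. Now suppose we already have \eqref{lem.selfaux.2} in the form $(\mean{B_{\theta^{(k+1)}r}}|Dw|^{p_{(k+1)}})^{1/p_{(k+1)}} \le M_{k+1}(\mean{B_{r}}|Dw|^{\kappa})^{1/\kappa}$. Lemma~\ref{lem.selfaux} is applied with $p_{1}=\kappa$, $p_{2}=p_{(k+1)}$, $p_{3}=p_{(k)}$, $\theta_{1}=1$, together with the estimate from Step 1. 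This gives the same type of bound for $p_{(k)}$, with a constant $M_{k}$ depending on $M_{k+1}$ and $\|Dw\|_{L^{\kappa}(B_r)}$. The key point, already contained in the proof of Lemma~\ref{lem.selfaux}, is the handling of the factor $r^{\alpha}(\mean{B_r}|Dw|^{\kappa})^{(q-p)/\kappa}$. It is bounded by $c\,r^{\alpha-n(q-p)/\kappa}\|Dw\|_{L^\kappa(B_r)}^{q-p}$, and since $\kappa>n(q-p)/\alpha$ by \eqref{kappa.range} and $r\le 1$, this is at most $c\|Dw\|^{q-p}_{L^\kappa(B_r)}$. Note that the averages over $B_{\theta r}$ with $\theta\ge1/2$ are comparable to those over $B_r$. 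After $N_{0}+1$ steps we reach $p_{(0)}=p$ on $B_{r/2}$, which is \eqref{Lp.Lkappa}.

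\emph{Main obstacle.} The delicate part is Step 1, namely checking the admissibility of every exponent fed into Lemma~\ref{lem.self.1}. In particular, the last exponent must land at or below $\kappa$ while every earlier exponent stays in the range where $m$ is defined. Also, $w\in W^{1,p}$ must be used to guarantee that all quantities are finite. A second point is bookkeeping of dependencies. The constant of Lemma~\ref{lem.self.1} depends on $\bar p$ and $\theta_1-\theta_2$, and these are fixed by $N_0(n,p,q,\alpha)$. The monotone dependence on $\|Dw\|_{L^\kappa(B_r)}$ comes from Lemma~\ref{lem.selfaux}.
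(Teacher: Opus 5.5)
Your proposal is correct and follows the paper's proof: the chain $p>m(p)>\dots>m^{N_0}(p)>\kappa$ on equally spaced radii, one application of Lemma~\ref{lem.self.1} plus H\"older at the bottom level, and then $N_0$ applications of Lemma~\ref{lem.selfaux} with $p_1=\kappa$, $\theta_1=1$, where the factor $r^{\alpha-n(q-p)/\kappa}\|Dw\|_{L^{\kappa}(B_r)}^{q-p}$ is controlled by \eqref{kappa.range} and $r\le 1$. Your Step~1 admissibility check is more careful than the paper, which applies Lemma~\ref{lem.self.1} at $\bar p=m^{N_0}(p)$ without checking $m^{N_0}(p)\ge n(p-1)/(n-1)$; this can fail for admissible parameters, and it is repaired as you indicate: use H\"older up to $\bar p=n(p-1)/(n-1)$, then apply Lemma~\ref{lem.self.1} there, using $m(n(p-1)/(n-1))<\kappa$ from \eqref{m.bd}.
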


\begin{proof}
Recall the definition of  $m(\cdot)$ given in \eqref{def.m} and the properties given below the definition.
If $N_{0}=0$, then $m(p) \leq \kappa$ and so, Lemma~\ref{lem.self.1} directly gives the desired estimate.

We now assume $N_{0}>0$. 
Denote $\bar{\theta}_{k} = (2 N_{0} + 3 -k)/[2(N_{0} +1)]$ for $k = 1, 2, \dots, N_{0} + 2$ and
\[ \bar{p}_{1} = \kappa, \quad \bar{p}_{2} = m^{N_{0}}(p), \quad \bar{p}_{3} = m^{N_{0}-1}(p), \quad \dots, \quad \bar{p}_{N_{0}+2} = p. \]
Note in particular that $\bar{\theta}_{k} - \bar{\theta}_{k+1} = 1/[2(N_{0}+1)]$ depends only on $n$, $p$, $q$ and $\alpha$.
Applying Lemma~\ref{lem.self.1} with $\bar p = \bar{p}_{2}$ and Jensen's inequality, we have
\begin{align*}
\left( \mean{B_{\bar{\theta}_{2} r}} |Dw|^{\bar{p}_{2}} \, dx \right)^{1/ \bar{p}_{2}}
& \leq c \left( 1 + r^{\alpha - n(q-p)/\bar{p}_{1}} \| Dw \|_{L^{\bar{p}_{1}}(B_{r})} \right) \left( \mean{B_{r}} |Dw|^{\bar{p}_{1}} \, dx \right)^{1/\bar{p_{1}}}.
\end{align*}
Note that $\alpha - n(q-p)/\bar{p}_{1} = \alpha - n(q-p)/\kappa>0$.
Hence, there is $M = M(\data, \bar L, \| Dw \|_{L^{\kappa}(B_{r})})$, which is an increasing function of $\| Dw \|_{L^{\kappa}(B_{r})}$, such that
\begin{align*}
\left( \mean{B_{\bar{\theta}_{2} r}} |Dw|^{\bar{p}_{2}} \, dx \right)^{1/\bar{p}_{2}}
& \leq M \left( \mean{B_{r}} |Dw|^{\bar{p_{1}}} \, dx \right)^{1/\bar{p}_{1}}.
\end{align*}
By applying $N_0$-times of Lemma~\ref{lem.selfaux} with
\[ (\theta_{1}, \theta_{2}, \theta_{3}, p_{1}, p_{2}, p_{3}) = (1, \bar{\theta}_{k+1}, \bar{\theta}_{k+2} ,\kappa, \bar{p}_{k+1}, \bar{p}_{k+2}) \]
for $k=1,2,3, \dots , N_{0}$ in an increasing order of $k$, we achieve the desired result.
\end{proof}

With the help of the above lemma, we are now able to modify the gradient integrability result in \cite[Theorem~5.1]{CM15b} as follows.
\begin{lemma}\label{lem:higher}
Let $w$ be a weak solution to \eqref{homogeneous} under assumptions \eqref{growth}, \eqref{p.bound}, \eqref{a.holder} and \eqref{rate.sola}. Assume that $B_{r}$ satisfies \eqref{p.phase.4r} for some $\bar{L} \in [8,\infty)$. Then for any $\tilde{q} < np/(n-2\alpha)$ ($= \infty$ when $n=2$ and $\alpha=1$), there exists a constant $c \equiv c(n,p,q,\nu,L, \|a\|_{L^{\infty}}, \alpha, [a]_{\alpha},\bar{L},\|Dw\|_{L^{\kappa}(B_{r})},\tilde{q})$ such that 
\begin{equation}\label{higher.est}
\left(\mean{B_{r/2}}|Dw|^{\tilde{q}}\,dx\right)^{1/\tilde{q}} \le c\left(\mean{B_{r}}|Dw|^{\kappa}\,dx\right)^{1/\kappa}.
\end{equation}
In particular, it follows that $Dw \in L^{2q-p}_{\loc}(B_{r}) \subset L^{q}_{\loc}(B_{r})$. 
\end{lemma}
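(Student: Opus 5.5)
We plan to combine the refined reverse H\"older inequality \eqref{Lp.Lkappa} with the higher integrability result of \cite[Theorem~5.1]{CM15b}, which for a weak solution $w \in W^{1,H}$ of the homogeneous double phase equation under \eqref{a.holder} and $q/p \le 1+\alpha/n$ gives, for every $\tilde q < np/(n-2\alpha)$, a local estimate of the form
\[
\left(\mean{B_{\rho/2}}|Dw|^{\tilde q}\,dx\right)^{1/\tilde q} \le c\left(\mean{B_{\rho}}|Dw|^{p}\,dx\right)^{1/p},
\]
with $c$ depending on the structural data and on $\|Dw\|_{L^{p}(B_\rho)}$. Since $p \le n$, Remark (iii) shows that \eqref{rate.sola} implies \eqref{rate.weak.sol}, so that result is applicable in our setting. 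Its defect is the dependence of the constant on $\|Dw\|_{L^{p}}$, and removing this defect is the purpose of \eqref{Lp.Lkappa}.

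The key steps run as follows. First, we apply \eqref{Lp.Lkappa} on $B_r$ to get
\[
\left(\mean{B_{r/2}}|Dw|^{p}dx\right)^{1/p}\le c\left(\mean{B_r}|Dw|^\kappa dx\right)^{1/\kappa},
\]
with constant depending on $\|Dw\|_{L^\kappa(B_r)}$. As a by-product, since $r\le 1$, we obtain
\[
\|Dw\|_{L^p(B_{r/2})} \le c\, r^{n/p-n/\kappa}\|Dw\|_{L^\kappa(B_r)}.
\]
Here the power of $r$ is negative, so this does not directly bound $\|Dw\|_{L^p}$ uniformly in $r$. We handle this by working with the scaled function $w_r(y)\coloneqq w(x_0+ry)/r$ on $B_1$, whose coefficient is $a_r(y)=a(x_0+ry)$. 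By \eqref{p.phase.4r}, $a_r$ satisfies $\sup a_r\le \bar L[a]_\alpha r^\alpha\le \bar L[a]_\alpha$ and $[a_r]_\alpha\le r^\alpha[a]_\alpha\le[a]_\alpha$. The normalized quantities
\[
\mean{B_{1/2}}|Dw_r|^p\,dy=\mean{B_{r/2}}|Dw|^pdx \qquad\text{and}\qquad \mean{B_1}|Dw_r|^\kappa\,dy=\mean{B_r}|Dw|^\kappa dx
\]
are then controlled by each other through \eqref{Lp.Lkappa}.

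The main obstacle is exactly this dependence of the constants. The constant in \eqref{Lp.Lkappa} depends on $\|Dw\|_{L^\kappa(B_r)}$, which is fine by the statement, but the constant in \cite{CM15b} depends on the unscaled $L^p$ energy. The bound on the latter must be made uniform, and we plan to do this through the structure of \cite{CM15b}'s proof rather than through its final constant. There, the $L^p$ norm enters only through the quantity $r^{\alpha}\big(\mean{B_r}|Dw|^p\big)^{(q-p)/p}$, which is used to control the $q$-phase energy by the $p$-phase energy. Under \eqref{p.phase.4r} this quantity is at most
\[
c\,r^{\alpha-n(q-p)/\kappa}\|Dw\|_{L^\kappa(B_r)}^{q-p},
\]
and since $\kappa>n(q-p)/\alpha$ by \eqref{kappa.range} the exponent of $r$ is positive and $r\le1$, so the whole quantity is bounded by a function of $\|Dw\|_{L^\kappa}$ alone. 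This mirrors the argument of Lemma~\ref{lem.selfaux}. With this bound in hand, we run the Gehring/difference-quotient argument of \cite[Theorem~5.1]{CM15b} on $B_{r/2}$ and then chain the result with \eqref{Lp.Lkappa}, using a covering argument to pass from $B_{r/4}$ back to $B_{r/2}$. This yields \eqref{higher.est}.

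For the final claim, it remains to check that $2q-p<np/(n-2\alpha)$. By \eqref{rate.weak.sol}, $q-p\le \alpha p/n$, so
\[
2q-p \le p+\frac{2\alpha p}{n}=\frac{p(n+2\alpha)}{n}<\frac{np}{n-2\alpha},
\]
where the last inequality holds because $(n+2\alpha)(n-2\alpha)<n^2$. Choosing $\tilde q=2q-p$ in \eqref{higher.est} and applying it on interior balls with a covering argument gives $Dw\in L^{2q-p}_{\loc}(B_r)$. Since $2q-p>q$, this space is contained in $L^{q}_{\loc}(B_r)$.
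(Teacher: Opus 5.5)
Your proposal is correct and is essentially the paper's argument. The paper takes the explicit estimate \cite[(5.45)]{CM15b}; under \eqref{p.phase.4r} its constant $\mathcal{B}$ is controlled by a power of precisely your quantity $r^{\alpha}(\mean{B_{3r/4}}|Dw|^{p}\,dx)^{(q-p)/p}$. It then converts that $L^{p}$ average to an $L^{\kappa}$ average via \eqref{Lp.Lkappa}, uses $\kappa>n(q-p)/\alpha$ from \eqref{kappa.range} to get a positive power of $r$, and applies \eqref{Lp.Lkappa} once more to the right-hand side. Your rescaling digression is unnecessary, and your check that $2q-p<np/(n-2\alpha)$ fills in a detail the paper leaves implicit.
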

\begin{proof}
It suffices to prove \eqref{higher.est} for $\tilde{q} = np/(n-2\beta)$ for $\beta \in (n(q/p-1),\alpha)$. Observe that we can obtain the following estimate as a variant of \cite[(5.45)]{CM15b}:
\begin{equation}\label{CMest}
\left(\mean{B_{r/2}}|Du|^{np/(n-2\beta)}\,dx\right)^{(n-2\beta)/np} \le c\mathcal{B}\left(\mean{B_{3r/4}}|Du|^{p}\,dx\right)^{1/p},
\end{equation}
where $c\equiv c(n,p,q,\nu,L,\beta)$ and
\begin{equation*}
\begin{aligned}
\mathcal{B} & \coloneqq \left[ 1+ \left(\|a\|_{L^{\infty}(B_{3r/4})}^{2} + r^{2\alpha}[a]_{\alpha}^{2}\right)^{b_{1}}\left(\mean{B_{3r/4}}|Du|^{p}\,dx\right)^{b_{2}-1}\right]^{1/p} \\
\overset{\eqref{Lp.Lkappa}}& {\le} c\left[ 1+ \left(\|a\|_{L^{\infty}(B_{3r/4})}^{2}+r^{2\alpha}[a]_{\alpha}^{2}\right)^{b_{1}}\left(\mean{B_{r}}|Du|^{\kappa}\,dx\right)^{(b_{2}-1)p/\kappa} \right]^{1/p} \\
\overset{\eqref{p.phase.4r}}& {\le} c\left[ 1+ (\bar{L}^{2}+1)^{b_{1}}[a]_{\alpha}^{2b_{1}}\frac{r^{2\alpha b_{1}}}{r^{n(b_{2}-1)p/\kappa}}\left(\int_{B_{r}}|Du|^{\kappa}\,dx\right)^{(b_{2}-1)p/\kappa} \right]^{1/p}
\end{aligned}
\end{equation*}
with
\begin{equation*}
b_{1} \coloneqq \frac{\beta p}{\beta p - n(q-p)} \ge 1, \quad b_{2} \coloneqq \frac{\beta(2q-p)-n(q-p)}{\beta p -n(q-p)} \ge 1.
\end{equation*}
Now, using the fact that
\begin{equation*}
2\alpha b_{1} - \frac{n(b_{2}-1)p}{\kappa} = \frac{2\beta pn}{\beta p-n(q-p)}\left(\frac{\alpha}{n} - \frac{q-p}{\kappa}\right) > 0 \quad \text{by } \eqref{kappa.range}
\end{equation*}
and applying \eqref{Lp.Lkappa} once again to the right-hand side of \eqref{CMest}, 
we conclude with \eqref{higher.est}.
\end{proof}

\subsection{Reverse H\"older estimates in the $(p,q)$-phase}

We now turn our attention to the situation when condition \eqref{p.phase.4r} is violated; that is, we consider the case
\begin{equation}\label{pq.phase.4r}
\sup_{x \in B_{r}} a(x) > \bar{L} [a]_{\alpha} r^{\alpha}.
\end{equation}
This together with \eqref{a.holder} implies 
\begin{equation}\label{a.away}
\sup_{x \in B_{r}} a(x) \leq 2 \inf_{x \in B_{r}} a(x).
\end{equation}
Observe from \eqref{a.away} that the vector field $A(\cdot)$, initially subject to \eqref{growth}, actually satisfies
\begin{equation}\label{pq.growth}
\left\{
\begin{aligned}
|A(x,z)| + |\partial A(x,z)||z| &\le L(|z|^{p-1} + \|a\|_{L^{\infty}(B_{r})}|z|^{q-1}),\\
\frac{\nu}{2} (|z|^{p-2} + \|a\|_{L^{\infty}(B_{r})}|z|^{q-2})|\zeta|^{2} &\le \partial A(x,z) \zeta \cdot \zeta,
\end{aligned}
\right.
\end{equation}
for any $x \in B_r$ and $z , \zeta \in \mathbb{R}^{n}$. 
Although the vector field $A(\cdot)$ depends on the spatial variable $x$, under the assumption \eqref{pq.phase.4r}, the growth and ellipticity conditions \eqref{pq.growth} allow us to employ standard regularity estimates for uniformly elliptic equations with Orlicz growth. 
In particular, a Caccioppoli type estimate is available in this setting, and combining this with the Sobolev--Poincar\'e type inequality for the corresponding Orlicz--Sobolev space yields the following reverse H\"older estimate. 
Since the proof follows standard arguments established in \cite[Theorem 9]{DieEtt2008}, we state the following lemma without proof. 

\begin{lemma}\label{lem.pq.phase}
Let $w$ be a weak solution to \eqref{homogeneous} under assumptions \eqref{growth}, \eqref{p.bound}, \eqref{a.holder}, and \eqref{rate.sola}.
Assume that \eqref{pq.phase.4r} holds for some $\bar{L} \in [8, \infty)$.
Then for any $\varepsilon \in (0,1)$ and any $x_0 \in B_{r/2}$, there is $c \equiv c(n,p,q,\nu,L, \varepsilon)$ satisfying
\[ 
\mean{B_{r/2}} H(x_0, |Dw|) \dx \leq c \left( \mean{B_{r}} [H(x_0, |Dw|)]^{\varepsilon} \dx \right)^{1/\varepsilon}.
\]
\end{lemma}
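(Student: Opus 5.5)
Since $a$ is comparable to a constant on $B_r$ by \eqref{a.away}, we may freeze the coefficient and work with the autonomous Young function $\Phi(t) \coloneqq H(x_0,t) = t^p + a(x_0)t^q$, for a fixed $x_0 \in B_{r/2}$. The plan is to prove a Caccioppoli inequality in terms of $\Phi$, combine it with an Orlicz Sobolev--Poincar\'e inequality to get a weak reverse H\"older inequality with an exponent $\theta<1$ on the right, and then lower $\theta$ to any $\varepsilon$ by a covering/absorption argument, exactly as in \cite[Theorem 9]{DieEtt2008}.

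\textbf{Step 1 (Caccioppoli).} By \eqref{a.away}, $\frac12 a(x_0)\le a(x)\le 2a(x_0)$ for $x\in B_r$, so \eqref{pq.growth} gives $A(x,z)\cdot z \ge c^{-1}\Phi(|z|)$ and $|A(x,z)|\le c\,\Phi(|z|)/|z|$ with $c=c(n,p,q,\nu,L)$. Since $\Phi$ satisfies $\Delta_2$ and $\nabla_2$ with constants depending only on $p,q$, testing \eqref{homogeneous} with $(w-(w)_{B_\rho})\eta^q$ and using Young's inequality for the pair $(\Phi,\Phi^*)$ gives
\[ \mean{B_{\rho/2}(y)}\Phi(|Dw|)\,dx \le c\mean{B_\rho(y)}\Phi\left(\frac{|w-(w)_{B_\rho(y)}|}{\rho}\right)dx \]
for every $B_\rho(y)\subset B_r$. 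The constant does not depend on $a(x_0)$, because only the quotients $\Phi'(t)t/\Phi(t)\in[p,q]$ enter.

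\textbf{Step 2 (Sobolev--Poincar\'e and weak reverse H\"older).} Next I would apply the Orlicz Sobolev--Poincar\'e inequality of Diening--Ettwein type: there exists $\theta=\theta(n,p,q)\in(0,1)$ with
\[ \mean{B}\Phi\left(\frac{|w-(w)_B|}{\rho}\right)dx\le c\left(\mean{B}[\Phi(|Dw|)]^{\theta}dx\right)^{1/\theta}. \]
Such a $\theta$ exists because $\Phi^{\theta}$ is still equivalent to an N-function when $\theta>1/p$, and the constants depend only on the $\Delta_2$/$\nabla_2$ indices. Together with Step 1 this gives the weak reverse H\"older inequality on every ball $B_\rho(y)\subset B_r$ with exponent $\theta$.

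\textbf{Step 3 (lowering the exponent).} This step is the main obstacle. To replace $\theta$ by an arbitrary $\varepsilon\in(0,1)$, I would use the standard trick for reverse H\"older inequalities with increasing supports, as in Giaquinta's book or Iwaniec--Nolder. Writing $f=\Phi(|Dw|)$, interpolate $\|f\|_{L^\theta}\le\|f\|_{L^1}^{1-\lambda}\|f\|_{L^\varepsilon}^{\lambda}$ and apply Young's inequality. This yields, for $r/2\le s<t\le r$,
\[ \mean{B_s}f\,dx\le \tfrac12\mean{B_t}f\,dx + c\,(t-s)^{-\gamma}r^{\gamma}\left(\mean{B_r}f^{\varepsilon}dx\right)^{1/\varepsilon}, \]
which requires running Step 2 on small balls covering $B_s$ with radius comparable to $t-s$. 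The standard iteration lemma then absorbs the first term and gives the claim with $c=c(n,p,q,\nu,L,\varepsilon)$. The care needed is to check that the covering constants and the dependence on $t-s$ enter only through powers, so that the absorption is legitimate. Finiteness of $\int_{B_r}f\,dx$ holds since $w\in W^{1,H}$.
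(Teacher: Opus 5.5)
Your proposal is correct and follows the route the paper indicates. The paper gives no proof and refers to the standard Caccioppoli plus Orlicz Sobolev--Poincar\'e argument of \cite[Theorem 9]{DieEtt2008}, applied to the frozen Young function $H(x_0,\cdot)$, with constants uniform in $a(x_0)$ by \eqref{a.away}. Your Step 3, which lowers the exponent from $\theta$ to an arbitrary $\varepsilon$ via interpolation, covering and Lemma~\ref{tech.lemma}, is the same standard argument (cf.\ \cite[Remark~6.12]{Giu}) that the paper invokes in the remark following the lemma.
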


\begin{remark}
In this remark, we combine the reverse H\"older inequalities obtained so far in this section.
To this aim, we fix any point $x_0 \in B_{r/2}$ and then consider the two functions $\Phi_{1}, \Phi_{2} : [0,\infty) \to \mathbb{R}$ defined by
\[ \Phi_{1}(t) \coloneqq \int_{0}^{t} \dfrac{H(x_0, s^{1/p})}{s} \, ds \qquad \text{and} \qquad \Phi_{2}(t) \coloneqq \int_{0}^{t} \dfrac{H(x_0, s)^{1/q}}{s} \, ds \]
for $t \in [0,\infty)$. 
It is straightforward to see that, for all $ t \in [0,\infty)$,
\[ \Phi_{1}'(t), \Phi_{2}'(t)>0, \quad\Phi_{1}''(t) \geq 0, \quad \text{and} \quad \Phi_{2}''(t) \leq 0. \]
In other words, $\Phi_{1}$ is convex while $\Phi_{2}$ is concave.
Consequently, for any weak solution $w$ to \eqref{homogeneous} under the assumptions of Lemma~\ref{lem.pq.phase}, we have
\begin{align*}
\mean{B_{r/2}} |Dw|^{p} \dx
& \leq c \Phi_{1}^{-1} \left( \mean{B_{r/2}} H(x_0, |Dw|) \dx \right) \\
& \leq c \Phi_{1}^{-1} \left( \left[ \mean{B_{r}} H(x_0, |Dw|)^{1/q} \dx \right]^{q} \right) \\
& \leq c \Phi_{1}^{-1} \left( \left[ \Phi_{2} \left( \mean{B_{r}} |Dw| \dx \right) \right]^{q} \right) \\
& \leq c \left ( \mean{B_{r}} |Dw| \dx \right)^{p}.
\end{align*}

Combining Lemmas~\ref{lem:higher} and \ref{lem.pq.phase}, we deduce that, regardless of the phases, any weak solution $w$ to \eqref{homogeneous} satisfies the following estimate:
\begin{equation*}
\left( \mean{ B_{r/2}} |Dw|^{p} \dx \right)^{1/p} \leq c \left( \mean{B_{r}} |Dw|^{\kappa} \dx \right)^{1/\kappa}, 
\end{equation*}
where $c$ has the same dependence as in Lemma~\ref{lem:higher}. 
Accordingly, the exponent $\kappa$ on the right-hand side of \eqref{higher.est} can then be replaced by any smaller exponent by using standard covering and interpolation-iteration arguments (see for instance \cite[Remark~6.12]{Giu}). 
More precisely, under the assumptions of Lemma \ref{lem:higher}, for any $\tilde{q} < np/(n-2\alpha)$, $\eta \in (0,1)$ and $\varepsilon \in (0,\kappa]$, there exists a constant $c \equiv c(n,p,q,\nu,L, \|a\|_{L^{\infty}}, \alpha, [a]_{\alpha}, \bar{L}, \|Dw\|_{L^{\kappa}(B_{r})},\tilde{q},\eta, \varepsilon)$ such that
\begin{equation}\label{higher.combined} 
\left(\mean{B_{\eta r}} |Dw|^{\tilde{q}} \, dx\right)^{1/\tilde{q}} \le c\left(\mean{B_{r}}|Dw|^{\varepsilon}\,dx\right)^{1/\varepsilon}. 
\end{equation}
\end{remark}

\subsection{Lipschitz regularity for frozen problems}
We end this section with regularity estimates for homogeneous equations with a specific class of Orlicz growth. Consider a $C^{1}$-vector field $A_{0}:\mathbb{R}^{n}\rightarrow\mathbb{R}^{n}$ satisfying
\begin{equation}\label{growth.fixed}
\left\{
\begin{aligned}
|A_{0}(z)| + |\partial A_{0}(z)||z| &\le L(|z|^{p-1}+a_{0}|z|^{q-1}), \\
\nu(|z|^{p-2}+a_{0}|z|^{q-2})|\xi|^{2} &\le \partial A_{0}(z)\zeta\cdot\zeta
\end{aligned}
\right.
\end{equation}
for any $z,\zeta \in \mathbb{R}^{n}$, where $a_{0} \ge 0$ is a fixed constant. 
The following local Lipschitz estimate can be found in  \cite{Ba15,Lie91}.

\begin{lemma}\label{lem.Lip}
Let $v$ be a weak solution to 
\begin{equation*}
-\mathrm{div}\,A_{0}(Dv)=0 \quad \textrm{in}\;\;\Omega
\end{equation*} 
under assumptions \eqref{growth.fixed} and \eqref{p.bound}. Then $Dv \in L^{\infty}_{\loc}(\Omega)$. Moreover, there exists a constant $c \equiv c(n,p,q,\nu,L)$, which is in particular independent of $a_{0}$,  such that
\begin{equation*}
\sup_{B_{r}}|Dv| \le c\mean{B_{2r}}|Dv|\,dx
\end{equation*}
holds whenever $B_{2r} \Subset \Omega$ is a ball.
\end{lemma}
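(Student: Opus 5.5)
We intend to treat this as an Orlicz-growth problem whose Young function $G(t)\coloneqq t^{p}+a_{0}t^{q}$ has \emph{structural indices independent of $a_{0}$}, and then to run the classical Lipschitz theory of Lieberman \cite{Lie91} (see also \cite{Ba15}) while keeping track of constants. Set $g(t)\coloneqq t^{p-1}+a_{0}t^{q-1}$. The key observation is
\[
p-1\le \frac{t\,g'(t)}{g(t)}\le q-1 \qquad\text{for all } t>0 \text{ and all } a_{0}\ge 0 .
\]
By \eqref{growth.fixed}, $A_{0}$ then satisfies $|A_{0}(z)|+|\partial A_{0}(z)||z|\le c\,g(|z|)$ and $\partial A_{0}(z)\zeta\cdot\zeta\ge \nu\,(g(|z|)/|z|)\,|\zeta|^{2}$. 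These are exactly Lieberman's structure conditions with $g$, and his constants depend only on $n$, $\nu$, $L$ and the two index bounds $p-1$ and $q-1$. This is where independence of $a_{0}$ comes from.

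\textbf{Step 1.} Regularize. We replace $A_{0}$ by a nondegenerate $C^{1}$ approximation (for instance, add $\epsilon(1+|z|^{2})^{(p-2)/2}z$ and mollify), with the same index bounds. We solve the Dirichlet problem with boundary datum $v$ on $B_{2r}$, so the approximate solutions $v_{\epsilon}\in W^{2,2}_{\loc}$ and $Dv_{\epsilon}\to Dv$ strongly in $L^{G}$; the latter follows from the monotonicity \eqref{mono2} type inequality. It then suffices to prove the estimate for $v_{\epsilon}$ with uniform constants.

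\textbf{Step 2.} Differentiate the equation in $x_{k}$ and test with $\eta^{2}\,\Psi(|Dv_{\epsilon}|)\,D_{k}v_{\epsilon}$ for suitable powers $\Psi$. This yields that $G(|Dv_{\epsilon}|)$, or equivalently $|Dv_{\epsilon}|^{2}$ in the metric weighted by $g(|Dv_\epsilon|)/|Dv_\epsilon|$, is a subsolution of a uniformly elliptic equation whose ellipticity ratio is bounded by $c(q-1)/(p-1)$; this ratio bound is again where the index bounds enter. Moser iteration, or De Giorgi as in \cite{Lie91}, then gives, for $r\le\rho<\rho'\le 2r$,
\[
\sup_{B_{\rho}}G(|Dv|)\le \frac{c}{(1-\rho/\rho')^{\chi}}\mean{B_{\rho'}}G(|Dv|)\,dx,\qquad c,\chi\equiv c,\chi(n,p,q,\nu,L).
\]

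\textbf{Step 3.} Lower the integrability to $L^{1}$. Write $S\coloneqq\sup_{B_{\rho'}}|Dv|$. Since $G(t)/t$ is increasing,
\[
\mean{B_{\rho'}}G(|Dv|)\,dx\le \frac{G(S)}{S}\mean{B_{\rho'}}|Dv|\,dx .
\]
Next, using Young's inequality for the pair $(G,G^{*})$ together with the $\Delta_{2}$ and $\nabla_{2}$ bounds $G(\lambda t)\le\lambda^{q}G(t)$ and $G(\lambda t)\ge \lambda^{p}G(t)$ for $\lambda\ge1$ (both uniform in $a_{0}$), we get
\[
\frac{G(S)}{S}\,m\le \delta\, G(S)+c(\delta)\,G(m).
\]
Applying $G^{-1}$ and again using the index bounds to absorb the factor $(1-\rho/\rho')^{-\chi}$ polynomially, we arrive at
\[
\sup_{B_{\rho}}|Dv|\le \tfrac12\sup_{B_{\rho'}}|Dv|+\frac{c}{(\rho'-\rho)^{\tilde\chi}}\,r^{\tilde\chi}\mean{B_{2r}}|Dv|\,dx .
\]
The standard iteration lemma for such inequalities (e.g.\ \cite[Lemma 6.1]{Giu}) then gives $\sup_{B_{r}}|Dv|\le c\mean{B_{2r}}|Dv|\,dx$.

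The main obstacle is Step 2, specifically checking that every constant in the Bernstein-type differentiation and the Moser iteration depends on $G$ only through $(p,q)$ and never through $a_{0}$. If scaling seems to break this, a fallback is to normalize: for $a_{0}>0$ we set $\tilde v(x)\coloneqq\lambda^{-1}v(x)$ with $\lambda\coloneqq a_{0}^{-1/(q-p)}$. This changes the coefficient to $1$ without affecting the form of the estimate, since the estimate is homogeneous of degree one in $Dv$. This reduces the claim to the single function $t^{p}+t^{q}$, where the constants obviously depend only on $n,p,q,\nu,L$.
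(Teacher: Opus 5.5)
Your proposal is correct and is essentially the argument behind the references the paper relies on for this lemma. The paper gives no proof of its own and only cites \cite{Ba15,Lie91}: Lieberman's Orlicz-growth Lipschitz theory, whose constants see $g(t)=t^{p-1}+a_0t^{q-1}$ only through the bounds $p-1\le tg'(t)/g(t)\le q-1$ (uniform in $a_0$), followed by the step lowering $\sup_{B_\rho}G(|Dv|)\le c\,\mean{B_{\rho'}}G(|Dv|)\,dx$ to an $L^1$ bound via Young's inequality, the $\Delta_2$/$\nabla_2$ bounds and the iteration lemma. Your scaling remark $\tilde v=a_0^{1/(q-p)}v$, which reduces every $a_0>0$ to $a_0=1$ because the estimate is $1$-homogeneous in $Dv$, is a clean independent way to see the uniformity in $a_0$.
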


\section{Proof of the main theorem}\label{sec4}

In this section, we prove Theorem~\ref{main.thm}. 
We first recall the definition of $\mathbf{M}_1$ given in \eqref{def.M1}. Observe that there exists a constant  $c_{0} \equiv c_{0}(n) \ge 1$ such that
\begin{equation}\label{ineq.M1}
\frac{|\mu|(\overline{B_{r}(x_{0})})}{r^{n-1}}  \le c_{0}\mean{B_{r}(x_{0})}\left[\frac{|\mu|(B_{2r}(x))}{(2r)^{n-1}}\right]\,dx 
 \le c_{0}\mean{B_{r}(x_{0})}\mathbf{M}_{1}(\mu)\,dx
\end{equation}
holds whenever $B_{r}(x_{0}) \subset \mathbb{R}^{n}$ is a ball.

We also recall a well-known iteration lemma, see for instance \cite[Lemma 6.1]{Giu}.

\begin{lemma}\label{tech.lemma}
Let $Z:[R,2R] \rightarrow [0,\infty)$ be a bounded function that satisfies
\begin{equation*}
Z(r_{1}) \le \varepsilon_{0}Z(r_{2}) + \frac{C_{1}}{(r_{2}-r_{1})^{\ell}} + C_{2}
\end{equation*}
for any $r_{1},r_{2}$ with $R \le r_{1} < r_{2} \le 2R$, where $\varepsilon_{0} \in (0,1)$ and $C_{1},C_{2},\ell > 0$ are given constants. Then there exists a constant $c \equiv c(\varepsilon_{0},\ell)$ such that
\begin{equation*}
Z(R) \le c\left[\frac{C_{1}}{R^{\ell}} + C_{2}\right].
\end{equation*}
\end{lemma}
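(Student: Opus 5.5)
The statement is the classical iteration lemma \cite[Lemma 6.1]{Giu}, and I would prove it by iterating the hypothesis along a geometric sequence of radii. Fix $\lambda \in (0,1)$, chosen below depending only on $\varepsilon_0$ and $\ell$. Define $r_0 \coloneqq R$ and $r_{i+1} \coloneqq r_i + (1-\lambda)\lambda^{i}R$. Then $r_i$ increases to $R + R = 2R$, and every $r_i$ lies in $[R,2R)$. Applying the hypothesis with $r_1 = r_i$ and $r_2 = r_{i+1}$ gives
\[ Z(r_i) \le \varepsilon_0 Z(r_{i+1}) + \frac{C_1}{(1-\lambda)^{\ell}\lambda^{i\ell}R^{\ell}} + C_2 . \]

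Next I would iterate this bound $k$ times starting from $i=0$, which yields
\[ Z(R) \le \varepsilon_0^{k} Z(r_k) + \left[\frac{C_1}{(1-\lambda)^{\ell}R^{\ell}} + C_2\right]\sum_{i=0}^{k-1}(\varepsilon_0\lambda^{-\ell})^{i}. \]
Here I used $\varepsilon_0^i \le (\varepsilon_0\lambda^{-\ell})^i$ to absorb the $C_2$ terms into the same sum. The key choice is $\lambda$ with $\varepsilon_0\lambda^{-\ell} < 1$, for instance $\lambda^{\ell} = (1+\varepsilon_0)/2$. With this choice the geometric series converges, and its sum is bounded by $(1-\varepsilon_0\lambda^{-\ell})^{-1}$, which depends only on $\varepsilon_0$ and $\ell$.

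Finally, since $Z$ is bounded on $[R,2R]$, we have $\varepsilon_0^{k}Z(r_k) \le \varepsilon_0^{k}\sup Z \to 0$ as $k\to\infty$. Letting $k\to\infty$ gives $Z(R) \le c\,[C_1R^{-\ell} + C_2]$ with $c = (1-\lambda)^{-\ell}(1-\varepsilon_0\lambda^{-\ell})^{-1}$, which depends only on $(\varepsilon_0,\ell)$.

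There is no real obstacle. The only delicate point is the choice of $\lambda$: the step sizes must shrink slowly enough that the loss $\lambda^{-i\ell}$ coming from the $C_1$ term is beaten by the gain $\varepsilon_0^{i}$. The boundedness assumption on $Z$ is exactly what kills the remainder term $\varepsilon_0^k Z(r_k)$.
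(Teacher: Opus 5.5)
Your proposal is correct and is essentially the standard argument of \cite[Lemma 6.1]{Giu}, which is exactly what the paper relies on: the paper gives no proof of its own and only cites Giusti. Your argument iterates along the radii $r_{i+1}=r_i+(1-\lambda)\lambda^{i}R$ with $\varepsilon_0\lambda^{-\ell}<1$, and uses the boundedness of $Z$ to remove the remainder $\varepsilon_0^{k}Z(r_k)$.
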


We are now ready to prove our main theorem.

\subsection{Proof of Theorem~\ref{main.thm}}
The proof is divided into eleven steps.

\textit{Step 1: Exit time and covering arguments. } 
We take a ball $B_{2R}  \Subset \Omega$ with $2R \le R_0$ as in the statement of the theorem. We initially assume that $R_0 \le 1$ satisfies
\begin{equation}\label{R0.ini}
R_{0}^{\alpha-(n-1)\frac{q-p}{p-1}}[|\mu|(\Omega)]^{\frac{q-p}{p-1}} \le 1 \qquad \text{and} \qquad R_{0}^{n-\frac{\kappa(n-1)}{p-1}}[|\mu|(\Omega)]^{\frac{\kappa}{p-1}} \le 1 ,
\end{equation} 
which is possible due to \eqref{rate.sola} and \eqref{def.kappa}. 
The value of $R_0$ will be determined later in the proof. We select two radii $r_{1},r_{2}$ such that $R \le r_{1} < r_{2} \le 2R$ and consider the upper level sets
\begin{equation*}
E(s,\lambda) \coloneqq \{x \in B_{s}: h(x,|Du(x)|)>\lambda\} \qquad \text{for any} \;\; R\le s \le 2R \;\; \text{and} \;\; \lambda>0.
\end{equation*} 
With $M \ge 1$ being a free parameter, whose value will be eventually determined in the end of the proof, we define
\begin{equation*}
\Psi(B_{\rho}(x_{0})) \coloneqq \mean{B_{\rho}(x_{0})}\left[h(x,|Du|)+M\mathbf{M}_{1}(\mu)\right]\,dx
\end{equation*}
for any ball $B_{\rho}(x_0) \subset B_{2R}$. We then observe that
\begin{equation}\label{psi.limit}
\lim_{\rho\searrow0}\Psi(B_{\rho}(x_{0}))> \lambda \qquad \text{for a.e.} \;\; x_{0} \in E(s,\lambda) \;\; \text{and} \;\; R \le s \le 2R.
\end{equation}
Also, if $x_{0} \in B_{r_{1}}$, then for any $\rho \in [(r_{2}-r_{1})/20, r_{2}-r_{1}]$ we have
\begin{equation}\label{def.lambda0}
\Psi(B_{\rho}(x_{0})) \le \frac{20^{n}r_{2}^{n}}{(r_{2}-r_{1})^{n}}\mean{B_{r_{2}}}\left[h(x,|Du|) + M\mathbf{M}_{1}(\mu)\right]\,dx \eqqcolon \lambda_{0}.
\end{equation}

From now on, we consider 
\begin{equation}\label{lambda.range}
\lambda > \lambda_0. 
\end{equation}
From \eqref{psi.limit} and \eqref{def.lambda0}, it follows that there exists an exit time radius $\rho_{x_{0}} \in (0, (r_{2}-r_{1})/20)$ such that
\begin{equation*}
\Psi(B_{\rho_{x_{0}}}(x_{0})) = \lambda \qquad \text{and} \qquad \Psi(B_{\rho}(x_{0})) < \lambda \quad \text{for every } \rho \in (\rho_{x_{0}},r_{2}-r_{1}].
\end{equation*} 
Since this holds for a.e. $x_0 \in E(r_1,\lambda)$, we see that
the family $\{B_{\rho_{x_{0}}}(x_{0})\}$ covers $E(r_1,\lambda)$ up to a negligible set. Thus, an application of Vitali's covering lemma implies that there exists a countable family $\{B_{\rho_{x_i}}(x_i)\} \equiv \{\tilde{B}_{i}\}$ of mutually disjoint balls satisfying
\begin{equation}\label{covering}
E(r_1 ,\lambda) \subset \bigcup_{i}5\tilde{B}_{i} \cup \text{negligible set} 
\end{equation}
and, whenever $i \in \mathbb{N}$,
\begin{equation}\label{exit.radius}
\Psi(B_{\rho_{x_i}}(x_i)) = \lambda \qquad \text{and} \qquad \Psi(B_{\rho}(x_i))<\lambda \quad \text{for every } \rho \in (\rho_{x_i}, r_{2}-r_{1}].
\end{equation}
In the following, we simply denote
\begin{equation}\label{def.Bi} 
B_{i} \coloneqq 5B_{\rho_{x_i}}(x_i) = 5\tilde{B}_{i} \qquad \text{and} \qquad \rho_i \coloneqq 5\rho_{x_i}. 
\end{equation}
Observe that
\begin{equation*}
\rho_{i} = 5\rho_{x_i} \le \frac{r_{2}-r_{1}}{4} \le R_0 \le 1
\end{equation*}
and that, by construction, 
\begin{equation*}
20\tilde{B}_{i} = 4B_{i} \subset B_{r_2}.
\end{equation*}
We also note that \eqref{exit.radius} implies
\begin{equation}\label{exit2}
\left\{
\begin{aligned}
 \Psi(\tilde{B}_{i}) & = \mean{\tilde{B}_{i}}\left[ h(x,|Du|) + M\mathbf{M}_{1}(\mu)\right]\,dx = \lambda, \\
 \Psi(4B_{i}) & = \mean{4B_{i}}\left[ h(x,|Du|) + M\mathbf{M}_{1}(\mu)\right]\,dx \le \lambda.
\end{aligned}
\right.
\end{equation}

\textit{Step 2: Approximation and a first comparison function. }
With $B_i$ being any fixed ball considered in the above, we choose a sequence of functions $\{\mu_j\} \subset L^{\infty}(\Omega)$ and a sequence of corresponding weak solutions $\{u_{j}\}$ to \eqref{approx.prob}, which satisfy the convergence properties described in Proposition~\ref{prop.existence}. In particular, we can choose $j \in \mathbb{N}$ large enough to satisfy
\begin{equation}\label{approx.Du.2}
\|Du_j \|_{L^{\kappa}(4B_{i})}  \le 2 \|Du \|_{L^{\kappa}(4B_i)},
\end{equation}
\begin{equation}\label{approx.Du}
\mean{4B_i}h(x,|Du_{j}-Du|)\,dx \le  \frac{\lambda_0}{M}
\end{equation}

and
\begin{equation}\label{approx.mu}
|\mu_j|(4B_i) \le 2|\mu|(\overline{4B_i}).
\end{equation}
Accordingly, we consider the weak solution $w_{i,j} \in u_{j} + W^{1,H}_{0}(4B_i)$ to the Dirichlet problem
\begin{equation*}
\left\{
\begin{aligned}
-\mathrm{div}\, A(x,Dw_{i,j})& = 0&\text{in }& 4B_{i}, \\
w_{i,j} & = u_{j}& \text{on }& \partial (4B_i).
\end{aligned}
\right.
\end{equation*}
In this step, we prove the following comparison estimate
\begin{equation}\label{comp1}
\mean{4B_i}h(x,|Du_{j}-Dw_{i,j}|)\,dx \le c\left[\frac{|\mu|(\overline{4B_i})}{(4\rho_i)^{n-1}}\right],
\end{equation}
where $c\equiv c(n,p,q,\nu,L, \|a\|_{L^{\infty}}, \alpha, [a]_{\alpha})$. For this we consider the following two cases:
\begin{equation}\label{alt.comp1}
\inf_{x\in 4B_{i}}a(x) > 8[a]_{\alpha}\rho_{i}^{\alpha} \qquad \text{and} \qquad \inf_{x\in 4B_{i}}a(x) \le 8[a]_{\alpha}\rho_{i}^{\alpha}.
\end{equation}

In the case $\eqref{alt.comp1}_{1}$, we see that 
\[ \inf_{x\in 4B_i}a(x) \le a(x) \le \inf_{x\in 4B_i}a(x) + [a]_{\alpha}(8\rho_i)^{\alpha} \le 2\inf_{x\in 4B_i}a(x) \] 
holds for any $x \in 4B_i$. Thus, recalling \eqref{pq.growth}, we can obtain estimate \eqref{comp1} in a way similar to those available for equations with Orlicz growth, see for instance \cite[Lemma 5.1]{Ba15}. 

In the case $\eqref{alt.comp1}_{2}$, we observe
\begin{equation}\label{alt.comp1.a}
a(x) \le \inf_{x\in 4B_i}a(x) + [a]_{\alpha}(8\rho_i)^{\alpha} \le 16[a]_{\alpha}\rho_i^{\alpha} \quad \text{for any }x\in 4B_i. 
\end{equation}
Here we revisit the argument used in the proof of Lemma \ref{global.est}. With $d>0$ and $\xi >1$ being any numbers, we use $d^{1-\xi} - (d+(u_{j}-w_{i,j})_{\pm})^{1-\xi}$ as a test function in
\[ -\mathrm{div}\,(A(x,Du_j)-A(x,Dw_{i,j})) = \mu_j \quad \text{in}\; 4B_i . \]
Then,  using \eqref{mono} and then 
replacing $u$, $\mu$ and $\Omega$ in the proof of Lemma \ref{global.est} with $u_j - w_{i,j}$, $\mu_j$ and $4B_{i}$, respectively,
we get
\[ \int_{4B_i}\frac{(|Du_j|+|Dw_{i,j}|)^{p-2}|Du_j - Dw_{i,j}|^2}{(d+|u_j - w_{i,j}|)^{\xi}}\,dx \le c\frac{d^{1-\xi}}{\xi-1}|\mu_j|(4B_i) \]
for some $c\equiv c(n,p,q,\nu,L) \ge 1$, whenever $d>0$ and $\xi >1$. 
Once we have the above estimate, we can follow the proof of  \cite[Lemma 2]{KM14BMS} in order to obtain
\begin{equation}\label{comp.p}
\mean{4B_{i}}|Du_{j}-Dw_{i,j}|^{s}\,dx \le c\left[\frac{|\mu_j|(4B_{i})}{(4\rho_i)^{n-1}}\right]^{s/(p-1)}
\end{equation}
for any $s$ satisfying \eqref{exp.range}, where $c\equiv c(n,p,q,\nu,L,s)$. In particular, we have
\begin{equation*}
\mean{4B_{i}}|Du_{j}-Dw_{i,j}|^{p-1}\,dx \le c\left[\frac{|\mu_j|(4B_{i})}{(4\rho_{i})^{n-1}}\right].
\end{equation*}
Now, recalling \eqref{q-1.admissible}, we can take $s = q-1$ in \eqref{comp.p}, which along with \eqref{alt.comp1.a} implies
\begin{equation*} 
\begin{aligned}
\mean{4B_{i}}a(x)|Du_{j}-Dw_{i,j}|^{q-1}\,dx & \le c[a]_{\alpha}\rho_{i}^{\alpha}\mean{4B_{i}}|Du_{j}-Dw_{i,j}|^{q-1}\,dx \\
& \le c[a]_{\alpha}\rho_{i}^{\alpha}\left[\frac{|\mu_j|(4B_{i})}{(4\rho_i)^{n-1}}\right]^{\frac{q-1}{p-1}} \\
& = c[a]_{\alpha}\rho_{i}^{\alpha}\left[\frac{|\mu_j|(4B_{i})}{(4\rho_{i})^{n-1}}\right]^{\frac{q-p}{p-1}}\left[\frac{|\mu_j|(4B_{i})}{(4\rho_{i})^{n-1}}\right].
 \end{aligned}
\end{equation*}
Then, due to \eqref{rate.sola}, \eqref{approx.mu} and the fact that $r\le 1$, we get
\begin{equation*}
\begin{aligned}
\rho_{i}^{\alpha}\left[\frac{|\mu_j|(4B_{i})}{(4\rho_{i})^{n-1}}\right]^{\frac{q-p}{p-1}}
& \le c\rho_{i}^{\alpha-(n-1)\frac{q-p}{p-1}}[|\mu|(\Omega)]^{\frac{q-p}{p-1}} \overset{\eqref{R0.ini}}{\le} c(n,p,q,\alpha). 
\end{aligned}
\end{equation*}
Combining the above three displays, and then using \eqref{approx.mu}, we conclude with  \eqref{comp1}. Note that \eqref{comp1} holds in both case $\eqref{alt.comp1}_1$ and $\eqref{alt.comp1}_2$.

\textit{Step 3: A second comparison function. } 
With the same ball $4B_{i}$ as in the previous step, we choose a point $x_{i,m} \in \overline{2B_{i}}$ satisfying
\[ a(x_{i,m}) =  \sup_{x \in 2B_{i}}a(x). \]
We then consider the following homogeneous frozen problem: 
\begin{equation}\label{def.v}
\left\{
\begin{aligned}
-\mathrm{div}\, A(x_{i,m},Dv_{i,j}) &= 0 &\textrm{in }& 2B_{i}, \\
v_{i,j} &= w_{i,j} &\textrm{on }& \partial (2B_{i}).
\end{aligned}
\right.
\end{equation}
Since $w_{i,j} \in W^{1,q}(2B_i)$ by Lemma~\ref{lem:higher}, 
the existence and uniqueness of a weak solution $v_{i,j}$ to \eqref{def.v} follows from standard monotonicity methods in Orlicz spaces. Moreover, $v_{i,j}$ satisfies the energy estimate
\begin{equation} \label{v.energy}
\mean{2B_{i}}(|Dv_{i,j}|^{p}+a(x_{i,m})|Dv_{i,j}|^{q})\,dx \le c\mean{2B_{i}}(|Dw_{i,j}|^{p}+a(x_{i,m})|Dw_{i,j}|^{q})\,dx 
\end{equation}
for some $c\equiv c(n,p,q,\nu,L)$.

As in \cite[(4.23)--(4.26)]{CM16JFA}, we estimate
\begin{equation}\label{comp.I}
\begin{aligned}
& \mean{2B_i}(|Dw_{i,j}-Dv_{i,j}|^{p}+a(x_{i,m})|Dw_{i,j}-Dv_{i,j}|^{q})\,dx  \\
& \le c\left(\osc_{2B_i}a\right)\mean{2B_i}|Dw_{i,j}|^{q-1}|Dv_{i,j}-Dw_{i,j}|\,dx \eqqcolon I
\end{aligned}
\end{equation}
for some $c\equiv c(n,p,q,\nu,L)$; note that we have also used  \eqref{mono2}, since $2 \le p < q$.

\textit{Step 4: Two different phases. } 
In order to estimate $I$, we divide the cases as follows:
\begin{equation}\label{alt.comp2}
\inf_{x\in 2B_i}a(x) > K[a]_{\alpha}\rho_{i}^{\alpha} \qquad \text{and} \qquad \inf_{x \in 2B_i}a(x) \le K[a]_{\alpha}\rho_{i}^{\alpha},
\end{equation}
where $K \ge 4$ is a free parameter whose value will be determined later in the proof.

\textit{Step 5: Estimates in the $(p,q)$-phase. } Here we consider the case $\eqref{alt.comp2}_{1}$. 
In this case, we have 
\[ \osc_{2B_{i}}a \le [a]_{\alpha}(4\rho_i)^{\alpha} \le \frac{4}{K}a(x) \] 
and 
\begin{equation}\label{a.equiv}
a(x_{i,m}) \le a(x) + [a]_{\alpha}(2\rho_i)^{\alpha} \le a(x) + \frac{4}{K}a(x) \le 2a(x)
\end{equation}
 for any $x\in 2B_{i}$; we thus obtain
\begin{equation}\label{pq.phase.I}
\begin{aligned}
I & \le \frac{c}{K}\mean{2B_{i}}a(x)|Dw_{i,j}|^{q-1}|Dv_{i,j}-Dw_{i,j}|\,dx \\
& \le \frac{c}{K}\mean{2B_{i}}a(x)(|Dv_{i,j}|+|Dw_{i,j}|)^{q}\,dx 
\overset{\eqref{v.energy}} {\le} \frac{c}{K}\mean{2B_{i}}(|Dw_{i,j}|^{p}+a(x_{i,m})|Dw_{i,j}|^{q})\,dx 
\end{aligned}
\end{equation}
for some $c\equiv c(n,p,q,\nu,L)$. 
For brevity, we write 
\[ H(x_{i,m},t) \eqqcolon H_{x_{i,m}}(t) \quad \text{and} \quad h(x_{i,m},t) \eqqcolon h_{x_{i,m}}(t) \qquad \text{for} \;\; t \ge 0. \] 
Observe that both $H_{x_{i,m}}$ and $H_{x_{i,m}} \circ h_{x_{i,m}}^{-1}$ are convex. Connecting \eqref{pq.phase.I} to \eqref{comp.I}, we get
\begin{equation*}
\mean{2B_{i}}H_{x_{i,m}}(|Dw_{i,j}-Dv_{i,j}|)\,dx \le \frac{c}{K}\mean{2B_{i}}H_{x_{i,m}}(|Dw_{i,j}|)\,dx.
\end{equation*}
We also note that, due to \eqref{a.equiv}, we can apply Lemma \ref{lem.pq.phase} to $w_{i,j}$, which (after choosing $\varepsilon>0$ sufficiently small) gives
\[ \mean{2B_i}H_{x_{i,m}}(|Dw_{i,j}|)\,dx \le cH_{x_{i,m}}\left(\mean{4B_i}|Dw_{i,j}|\,dx\right)  \]
for some $c\equiv c(n,p,q,\nu,L)$.
Combining the above two displays, and then using Jensen's inequality, we obtain
\begin{equation}\label{conclusion.pq}
\begin{aligned}
\mean{2B_{i}}h_{x_{i,m}}(|Dw_{i,j}-Dv_{i,j}|)\,dx & \le (h_{x_{i,m}}\circ H_{x_{i,m}}^{-1})\left(\mean{2B_{i}}H_{x_{i,m}}(|Dw_{i,j}-Dv_{i,j}|)\,dx\right) \\
& \le c(h_{x_{i,m}}\circ H_{x_{i,m}}^{-1})\left(\frac{1}{K}\mean{2B_{i}}H_{x_{i,m}}(|Dw_{i,j}|)\,dx\right) \\
& \le \frac{c}{K^{(p-1)/q}}\mean{4B_{i}}h_{x_{i,m}}(|Dw_{i,j}|)\,dx \\
\overset{\eqref{a.equiv}}&{\le} \frac{\tilde{c}}{K^{(p-1)/q}}\mean{4B_i}h(x,|Dw_{i,j}|)\,dx
\end{aligned}
\end{equation}
for some $\tilde{c}\equiv \tilde{c}(n,p,q,\nu,L)$.

\textit{Step 6: Estimates in the $p$-phase. } We next assume $\eqref{alt.comp2}_{2}$ and note that
\begin{equation}\label{p.phase.am}
a(x_{i,m}) \le \inf_{x \in 2B_i}a(x) + [a]_{\alpha}(2\rho_i)^{\alpha} \le (4+K)[a]_{\alpha}(2\rho_i)^{\alpha},
\end{equation}
which enables us to use Lemma~\ref{lem:higher}. 
Applying the obvious inequality $\osc_{2B_i}a \le a(x_{i,m})$ and then Young's inequality to the right-hand side of \eqref{comp.I}, we get
\begin{equation*}
\begin{aligned}
& \mean{2B_i}(|Dw_{i,j}-Dv_{i,j}|^{p} + a(x_{i,m})|Dw_{i,j}-Dv_{i,j}|^{q})\,dx \\
 & \le ca(x_{i,m})\mean{2B_{i}}|Dw_{i,j}|^{q-1}|Dw_{i,j}-Dv_{i,j}|\,dx \\
& \le c\mean{2B_{i}}a(x_{i,m})|Dw_{i,j}|^{q}\,dx + \frac{1}{2}\mean{2B_{i}}a(x_{i,m})|Dw_{i,j}-Dv_{i,j}|^{q}\,dx
\end{aligned}
\end{equation*}
and so
\begin{equation}\label{p.phase.start}
\mean{2B_i}(|Dw_{i,j}-Dv_{i,j}|^{p}+a(x_{i,m})|Dw_{i,j}-Dv_{i,j}|^{q})\,dx \le ca(x_{i,m})\mean{2B_{i}}|Dw_{i,j}|^{q}\,dx
\end{equation}
for a constant $c\equiv c(n,p,q,\nu,L)$. 
We thus have
\begin{equation*}
\begin{aligned}
\mean{2B_{i}}|Dw_{i,j}-Dv_{i,j}|^{p-1}\,dx &\le \left(\mean{2B_{i}}|Dw_{i,j}-Dv_{i,j}|^{p}\,dx\right)^{(p-1)/p} \\
\overset{\eqref{p.phase.start}}&{\le} c\left(a(x_{i,m})\mean{2B_{i}}|Dw_{i,j}|^{q}\,dx\right)^{(p-1)/p} \\
\overset{\eqref{p.phase.am},\eqref{higher.combined}}&{\le} c\left[\rho_{i}^{\alpha}\left(\mean{3B_i}|Dw_{i,j}|^{\kappa}\,dx\right)^{q/\kappa}\right]^{(p-1)/p} \\
& = c\left[\rho_{i}^{\alpha}\left(\mean{3B_i}|Dw_{i,j}|^{\kappa}\,dx\right)^{(q-p)/\kappa + p/\kappa}\right]^{(p-1)/p} \\
& \le c\left[\rho_{i}^{\alpha-n(q-p)/\kappa}\|Dw_{i,j}\|_{L^{\kappa}(3B_i)}^{q-p}\left(\mean{3B_i}|Dw_{i,j}|^{\kappa}\,dx\right)^{p/\kappa}\right]^{(p-1)/p} \\
\overset{\eqref{higher.combined}}&{\le} c\left[\rho_{i}^{\alpha-n(q-p)/\kappa}\|Dw_{i,j}\|_{L^{\kappa}(4B_i)}^{q-p}\right]^{(p-1)/p}\mean{4B_i}|Dw_{i,j}|^{p-1}\,dx
\end{aligned}
\end{equation*}
and
\begin{equation*}
\begin{aligned}
\mean{2B_{i}}a(x_{i,m})|Dw_{i,j}-Dv_{i,j}|^{q-1}\,dx &\le [a(x_{i,m})]^{1/q}\left(\mean{2B_i}a(x_{i,m})|Dw_{i,j}-Dv_{i,j}|^{q}\,dx\right)^{(q-1)/q} \\
\overset{\eqref{p.phase.start}}& {\le} ca(x_{i,m})\left(\mean{2B_{i}}|Dw_{i,j}|^{q}\,dx\right)^{(q-1)/q} \\
\overset{\eqref{p.phase.am},\eqref{higher.combined}}& {\le} c\rho_{i}^{\alpha}\left(\mean{3B_{i}}|Dw_{i,j}|^{\kappa}\,dx\right)^{(q-1)/\kappa} \\
& = c\rho_{i}^{\alpha}\left(\mean{3B_{i}}|Dw_{i,j}|^{\kappa}\,dx\right)^{(q-p)/\kappa + (p-1)/\kappa} \\
& \le c\rho_{i}^{\alpha-n(q-p)/\kappa}\|Dw_{i,j}\|_{L^{\kappa}(3B_i)}^{q-p}\left(\mean{3B_{i}}|Dw_{i,j}|^{\kappa}\,dx\right)^{(p-1)/\kappa} \\
\overset{\eqref{higher.combined}}&{\le} c\rho_{i}^{\alpha-n(q-p)/\kappa}\|Dw_{i,j}\|_{L^{\kappa}(4B_i)}^{q-p}\mean{4B_i}|Dw_{i,j}|^{p-1}\,dx,
\end{aligned}
\end{equation*}
where $c\equiv c(n,p,q,\nu,L, \|a\|_{L^{\infty}}, \alpha, [a]_{\alpha},\|Dw_{i,j}\|_{L^{\kappa}(4B_i)},K)$. Combining the above two displays, and using the fact that \eqref{R0.ini}, \eqref{approx.Du.2},  \eqref{approx.mu} and \eqref{comp.p} imply
\begin{equation}\label{Dw.data}
\begin{aligned}
\|Dw_{i,j}\|_{L^{\kappa}(4B_i)} & \le \|Dw_{i,j}-Du_j\|_{L^{\kappa}(4B_i)} + \|Du_j\|_{L^{\kappa}(4B_i)} \\
& \le c\rho_{i}^{n-\kappa(n-1)/(p-1)}[|\mu_j|(4B_i)]^{\kappa/(p-1)} + \|Du_j\|_{L^{\kappa}(4B_i)} \\
& \le c(\data),
\end{aligned}
\end{equation} 
we get
\begin{equation}\label{conclusion.p}
\mean{2B_i}h(x_{i,m} ,|Dw_{i,j}-Dv_{i,j}|)\,dx \le c_{K}\rho_{i}^{\sigma}\mean{4B_{i}}h(x,|Dw_{i,j}|)\,dx
\end{equation}
for some $c_{K}\equiv c_{K}(\data,K)$, where
\begin{equation}\label{def.sigma} 
\sigma \coloneqq \frac{p-1}{p}\left(\alpha-\frac{n(q-p)}{\kappa}\right) > 0 \quad \text{by }\, \eqref{kappa.range}. 
\end{equation}

\textit{Step 7: Matching the two phases and comparison estimates. }
Combining \eqref{conclusion.pq} and \eqref{conclusion.p}, we have that
\begin{equation*}
\begin{aligned}
\mean{2B_i}h(x_{i,m},|Dw_{i,j}-Dv_{i,j}|)\,dx 
\le \left[\frac{\tilde{c}}{K^{(p-1)/q}} + c_{K}\rho_{i}^{\sigma}\right]\mean{4B_i}h(x,|Dw_{i,j}|)\,dx
\end{aligned}
\end{equation*}
holds with $\tilde{c}\equiv \tilde{c}(n,p,q,\nu,L)$ and $c_{K} \equiv c_{K}(\data,K)$. 
Here, if $R_0$ is so small that
\begin{equation}\label{R0.ini2}
c_{K}R_{0}^{\sigma} \le 1,
\end{equation}
then
\begin{equation*}
\begin{aligned}
& \mean{2B_i}h(x_{i,m},|Dw_{i,j}-Dv_{i,j}|)\,dx \\
& \le 2^{q-2}\left[\frac{\tilde{c}}{K^{(p-1)/q}} + c_{K}\rho_{i}^{\sigma}\right]\left(\mean{4B_i}h(x,|Dw_{i,j}-Du_j|)\,dx + \mean{4B_i}h(x,|Du_j|)\,dx\right) \\
\overset{\eqref{comp1}}&{\le} c\left[\frac{|\mu|(\overline{4B_i})}{(4\rho_i)^{n-1}}\right] + 2^{q-2}\left[\frac{\tilde{c}}{K^{(p-1)/q}} + c_{K}\rho_{i}^{\sigma}\right]\mean{4B_i}h(x,|Du_{j}|)\,dx 
\end{aligned}
\end{equation*}
holds for some $c\equiv c(\data)$, since $K \ge 4$.
Combining the above display with \eqref{comp1}, we arrive at
\begin{equation}\label{comp.comb}
\begin{aligned}
& \mean{2B_i}h(x,|Du_{j}-Dv_{i,j}|)\,dx \\
& \le 2^{q-2}\mean{2B_i}h(x,|Du_j - Dw_{i,j}|)\,dx + 2^{q-2}\mean{2B_i}h(x_{i,m},|Dw_{i,j}-Dv_{i,j}|)\,dx \\
& \le  c_{*}\left[\frac{|\mu|(\overline{4B_i})}{(4\rho_i)^{n-1}}\right] + 2^{2(q-2)}\left[\frac{\tilde{c}}{K^{(p-1)/q}} + c_{K}R_{0}^{\sigma}\right]\mean{4B_i}h(x,|Du_j|)\,dx,
\end{aligned}
\end{equation}
where $c_{*}\equiv c_{*}(\data)$, $\tilde{c}\equiv \tilde{c}(n,p,q,\nu,L)$ and $ c_{K} \equiv c_{K}(\data,K)$. 
With these constants and $c_{0} \equiv c_{0}(n)$ given in \eqref{ineq.M1},  we now introduce the notation
\begin{equation}\label{def.S}
 S(R_0, K,M) \coloneqq 2^{4(q-2)}\left[\frac{\tilde{c}}{K^{(p-1)/q}} + c_{K}R_{0}^{\sigma}\right] + \frac{c_{0}c_{*}+1}{M}.
\end{equation}
Then, applying \eqref{ineq.M1}, \eqref{lambda.range}, \eqref{exit2}, and \eqref{approx.Du} to \eqref{comp.comb}, we have that
\begin{equation}\label{final.comp}
\mean{2B_i}h(x,|Du_{j}-Dv_{i,j}|)\,dx \le S(R_0,K,M)\lambda
\end{equation}
holds for any ball $B_i$ from the covering given in \eqref{def.Bi}. 

\textit{Step 8: The two phases at a different threshold. }
Here we prove that
\begin{equation}\label{h.Dv.lambda}
\mean{2B_i}h(x_{i,m},|Dv_{i,j}|)\,dx  \le c\lambda
\end{equation}
holds for a constant $c\equiv c(\data)$.  To this aim, we start by estimating
\begin{equation*}
\mean{2B_i}h(x_{i,m},|Dv_{i,j}|)\,dx \le c\mean{2B_i}h(x_{i,m},|Dw_{i,j}-Dv_{i,j}|)\,dx + c\mean{2B_i}h(x_{i,m},|Dw_{i,j}|)\,dx.
\end{equation*}
Then, we again consider the following two alternatives:
\begin{equation}\label{alt.2Bi}
\inf_{x \in 2B_i}a(x) > 10[a]_{\alpha}\rho_{i}^{\alpha} \qquad \text{and} \qquad \inf_{x\in 2B_i}a(x) \le 10[a]_{\alpha}\rho_{i}^{\alpha},
\end{equation}
which are nothing but $\eqref{alt.comp2}_{1}$ and $\eqref{alt.comp2}_{2}$, respectively, with $K=10$. 

If $\eqref{alt.2Bi}_{1}$ holds, then we have
\begin{equation*}
\mean{2B_i}h(x_{i,m},|Dw_{i,j}-Dv_{i,j}|)\,dx \overset{\eqref{conclusion.pq}}{\le} c\mean{4B_i}h(x,|Dw_{i,j}|)\,dx
\end{equation*}
and
\begin{equation}\label{h.Dw.lambda}
\begin{aligned}
& \mean{2B_i}h(x_{i,m},|Dw_{i,j}|)\,dx \overset{\eqref{a.equiv}}{\le} c\mean{4B_i}h(x,|Dw_{i,j}|)\,dx \\
& \le c\mean{4B_i}h(x,|Du_j - Dw_{i,j}|)\,dx + c\mean{4B_i}h(x,|Du_j|)\,dx \\
\overset{\eqref{comp1}}& {\le} c\left[\frac{|\mu|(\overline{4B_i})}{(4\rho_i)^{n-1}}\right] + c\mean{4B_i}h(x,|Du-Du_j|)\,dx + c\mean{4B_i}h(x,|Du|)\,dx \\
& \le c\lambda,
\end{aligned}
\end{equation}
where we have also used \eqref{ineq.M1}, \eqref{approx.Du}, \eqref{def.lambda0} and \eqref{lambda.range} for the last inequality. 
The above two displays lead to \eqref{h.Dv.lambda}.

If $\eqref{alt.2Bi}_{2}$ holds, then we estimate
\begin{equation*}
\begin{aligned}
a(x_{i,m})\mean{2B_i}|Dw_{i,j}|^{q-1}\,dx \overset{\eqref{p.phase.am},\eqref{higher.est}}&{\le} c\rho_{i}^{\alpha}\left(\mean{4B_i}|Dw_{i,j}|^{\kappa}\,dx\right)^{(q-1)/\kappa} \\
& = c\rho_{i}^{\alpha}\left(\mean{4B_i}|Dw_{i,j}|^{\kappa}\,dx\right)^{(q-p)/\kappa + (p-1)/\kappa} \\
& = c\rho_{i}^{\alpha-n(q-p)/\kappa}\|Dw_{i,j}\|^{q-p}_{L^{\kappa}(4B_i)}\left(\mean{4B_i}|Dw_{i,j}|^{\kappa}\,dx\right)^{(p-1)/\kappa} \\
\overset{\eqref{kappa.range},\eqref{Dw.data}}&{\le} c\mean{4B_i}|Dw_{i,j}|^{p-1}\,dx,
\end{aligned}
\end{equation*}
which gives \eqref{h.Dw.lambda} in this case as well.
Using this and \eqref{conclusion.p}, we again obtain \eqref{h.Dv.lambda}.

\textit{Step 9: A priori estimates for $Dv_{i,j}$. } 
Note that estimate \eqref{h.Dv.lambda} is independent of the cases $\eqref{alt.2Bi}_1$ and $\eqref{alt.2Bi}_2$ coming into the play.  Accordingly, here we apply Lemma \ref{lem.Lip} to $v_{i,j}$, with the choice $a_{0} \equiv a(x_{i,m})$, which yields
\begin{equation*}
 \sup_{x \in B_i}h(x_{i,m},|Dv_{i,j}|) \le c\mean{2B_i}h(x_{i,m},|Dv_{i,j}|)\,dx \le c_{l}\lambda
 \end{equation*}
for some $c_{l}\equiv c_{l}(\data)$. 
This together with the definition of $x_{i,m}$ implies
\begin{equation}\label{lip.lambda}
\sup_{x \in B_i}h(x,|Dv_{i,j}|) \le c_{l}\lambda.
\end{equation}

\textit{Step 10: Estimates on upper level sets. } 
By using \eqref{lip.lambda}, we have
\begin{equation*}
\begin{aligned}
& 3^{q-2}c_{l}\lambda|B_{i} \cap \{h(x,|Du|) > 2\cdot3^{q-2}c_{l}\lambda\}| + \frac{1}{2}\int_{B_{i}\cap\{h(x,|Du|)>2\cdot3^{q-2}c_{l}\lambda\}}h(x,|Du|)\,dx \\
& \le \int_{B_{i}\cap\{h(x,|Du|)>2\cdot3^{q-2}c_{l}\lambda\}}h(x,|Du|)\,dx \\
& \le 3^{q-2}\int_{B_i}h(x,|Du-Du_{j}|)\,dx + 3^{q-2}\int_{B_i}h(x,|Du_{j}-Dv_{i,j}|)\,dx \\
& \quad + 3^{q-2}\int_{B_i \cap \{h(x,|Du|)>2\cdot3^{q-2}c_{l}\lambda\}}h(x,|Dv_{i,j}|)\,dx \\
& \le 3^{q-2}\int_{B_i}h(x,|Du-Du_j|)\,dx + 3^{q-2}\int_{B_i}h(x,|Du_j - Dv_{i,j}|)\,dx \\
& \quad + 3^{q-2}c_{l}\lambda|B_{i} \cap \{h(x,|Du|)>2\cdot3^{q-2}c_{l}\lambda\}|
\end{aligned}
\end{equation*}
and therefore
\begin{equation*}
\begin{aligned}
& \int_{B_i \cap \{h(x,|Du|)>2\cdot 3^{q-2}c_{l}\lambda\}}h(x,|Du|)\,dx \\
& \le 2\cdot 3^{q-2}\int_{B_i}h(x,|Du-Du_j|)\,dx + 2\cdot 3^{q-2}\int_{B_i}h(x,|Du_j - Dv_{i,j}|)\,dx \\
& \le 2\cdot 3^{q-2}|4B_i|\left(\mean{4B_i}h(x,|Du-Du_{j}|)\,dx + \mean{2B_i}h(x,|Du_{j}-Dv_{i,j}|)\,dx\right).
\end{aligned}
\end{equation*}
Applying \eqref{approx.Du} and \eqref{final.comp} to each term in the right-hand side, we arrive at
\begin{equation}\label{hDu.Bi}
\int_{B_i \cap \{h(x,|Du|)>2\cdot3^{q-2}c_{l}\lambda\}}h(x,|Du|)\,dx \le 3^{q-2}40^{n}S(R_0,K,M)\lambda|\tilde{B}_i|.
\end{equation}

We now establish a uniform estimate for $|\tilde{B}_i|$. Observe that $\eqref{exit2}_{1}$ implies
\begin{equation*}
|\tilde{B}_i| = \frac{1}{\lambda}\int_{\tilde{B}_i}[h(x,|Du|) + M\mathbf{M}_{1}(\mu)]\,dx.
\end{equation*}
In particular, either
\begin{equation}\label{hm}
\frac{1}{\lambda}\int_{\tilde{B}_i}h(x,|Du|)\,dx \ge \frac{1}{2}|\tilde{B}_i| \qquad \text{or} \qquad \frac{1}{\lambda}\int_{\tilde{B}_i}M\mathbf{M}_{1}(\mu)\,dx \ge \frac{1}{2}|\tilde{B}_i|
\end{equation}
must hold. If $\eqref{hm}_1$ holds, then we have
\begin{equation*} 
|\tilde{B}_i| \le \frac{2}{\lambda}\int_{\tilde{B}_i}h(x,|Du|)\,dx 
\le \frac{2}{\lambda}\int_{\tilde{B}_i \cap \{h(x,|Du|)>\lambda/4\}}h(x,|Du|)\,dx + \frac{1}{2}|\tilde{B}_i|
\end{equation*}
and so
\[ |\tilde{B}_i| \le \frac{4}{\lambda}\int_{\tilde{B}_i \cap \{h(x,|Du|)>\lambda/4\}}h(x,|Du|)\,dx. \]
Similarly, if $\eqref{hm}_2$ holds, then we have
\[ |\tilde{B}_i| \le \frac{4}{\lambda}\int_{\tilde{B}_i \cap \{\mathbf{M}_1(\mu)>\lambda/(4M)\}}M\mathbf{M}_1(\mu)\,dx. \]
The above two displays imply
\begin{equation*}
|\tilde{B}_i| \le \frac{4}{\lambda}\int_{\tilde{B}_i \cap \{h(x,|Du|) > \lambda/4\}}h(x,|Du|)\,dx 
+ \frac{4}{\lambda}\int_{\tilde{B}_i \cap \{\mathbf{M}_{1}(\mu) > \lambda/(4M)\}}M\mathbf{M}_{1}(\mu)\,dx.
\end{equation*}
Connecting this inequality to \eqref{hDu.Bi}, we arrive at
\begin{equation}\label{upper.1st}
\begin{aligned}
& \int_{B_i \cap \{h(x,|Du|)>2\cdot3^{q-2}c_{l}\lambda\}}h(x,|Du|)\,dx \\
& \le 3^{q-2}80^{n}S(R_0,K,M)\int_{\tilde{B}_i \cap \{h(x,|Du|)>\lambda/4\}}h(x,|Du|)\,dx \\
& \quad + 3^{q-2}80^{n}S(R_{0},K,M)\int_{\tilde{B}_i \cap \{\mathbf{M}_{1}(\mu)>\lambda/(4M)\}}M\mathbf{M}_{1}(\mu)\,dx.
\end{aligned}
\end{equation}
Recalling \eqref{covering} and the fact that the balls $\{\tilde{B}_i\}$ are disjoint, after a change of variables, we sum up \eqref{upper.1st} over the covering $\{B_i\}$, thereby obtaining
\begin{equation}\label{upper.2nd}
\begin{aligned}
\int_{E(r_1,\lambda)}h(x,|Du|)\,dx & \le 3^{q-2}80^{n}S(R_0,K,M)\int_{E(r_2,\lambda/(8\cdot3^{q-2}c_{l}))}h(x,|Du|)\,dx \\
& \quad + 3^{q-2}80^{n}S(R_0,K,M)\int_{\mathcal{E}(r_{2},\lambda/(8\cdot3^{q-2}c_{l}M))}M\mathbf{M}_{1}(\mu)\,dx
\end{aligned}
\end{equation}
whenever
\[ \lambda \ge 2\cdot3^{q-2}c_{l}\lambda_{0} = 2\cdot3^{q-2}c_{l}\frac{20^{n}r_{2}^{n}}{(r_{2}-r_{1})^n}\mean{B_{r_2}}[h(x,|Du|) + M\mathbf{M}_{1}(\mu)]\,dx. \]
Here we have used the notation
\[ \mathcal{E}(s,\lambda) \coloneqq \{x \in B_{s}: \mathbf{M}_{1}(\mu)(x) > \lambda \} , \qquad R \le s \le 2R, \;\; \lambda>0 \]
and also used the fact that $\lambda \ge \lambda_0$ for $\lambda_0$ defined in \eqref{def.lambda0}.

\textit{Step 11: Integration and conclusion. } 
We now conclude the proof by integrating on level sets. We consider, for $t \ge 0$, the truncated function
\[ [h(x,|Du|)]_t \coloneqq \min\{h(x,|Du|), t\}. \]
Then, for $t \ge 4\cdot3^{q-2}c_{l}\lambda_0$, we multiply \eqref{upper.2nd} by $\lambda^{\gamma-2}$ and integrate the resulting estimate with respect to $\lambda$ in order to have
\begin{equation}\label{int.start}
\begin{aligned}
& \int_{2\cdot 3^{q-2}c_{l}\lambda_0}^{t}\lambda^{\gamma-2}\int_{E(r_{1},\lambda)}h(x,|Du|)\,dx\,d\lambda \\
& \le 3^{q-2}80^{n}S(R_0,K,M)\int_{2\cdot3^{q-2}c_{l}\lambda_0}^{t}\lambda^{\gamma-2}\int_{E(r_2,\lambda/(8\cdot3^{q-2}c_{l}))}h(x,|Du|)\,dx\,d\lambda \\
& \quad + 3^{q-2}80^{n}S(R_0,K,M)\int_{2\cdot3^{q-2}c_{l}\lambda_0}^{t}\lambda^{\gamma-2}\int_{\mathcal{E}(r_2,\lambda/(8\cdot3^{q-2}c_{l}M))}M\mathbf{M}_1(\mu)\,dx\,d\lambda.
\end{aligned}
\end{equation}
Observe that Fubini's theorem implies
\begin{equation}\label{int.2nd}
\begin{aligned}
& \int_{2\cdot 3^{q-2}c_{l}\lambda_0}^{t}\lambda^{\gamma-2}\int_{E(r_{1},\lambda)}h(x,|Du|)\,dx\,d\lambda \\
& = \frac{1}{\gamma-1}\int_{B_{r_1}}[h(x,|Du|)]^{\gamma-1}_{t}h(x,|Du|)\,dx - \int_{0}^{2\cdot3^{q-2}c_{l}\lambda_0}\lambda^{\gamma-2}\int_{E(r_1,\lambda)}h(x,|Du|)\,dx\,d\lambda
\end{aligned}
\end{equation}
and, in a similar way, 
\begin{equation}\label{int.22}
\begin{aligned}
& \int_{2\cdot3^{q-2}c_{l}\lambda_0}^{t}\lambda^{\gamma-2}\int_{E(r_2,\lambda/(8\cdot3^{q-2}c_l))}h(x,|Du|)\,dx\,d\lambda \\
& \le \frac{(8\cdot3^{q-2}c_l)^{\gamma-1}}{\gamma-1}\int_{B_{r_2}}[h(x,|Du|)]^{\gamma-1}_{t/(8\cdot3^{q-2}c_l)}h(x,|Du|)\,dx \\
& \le \frac{(8\cdot3^{q-2}c_l)^{\gamma-1}}{\gamma-1}\int_{B_{r_2}}[h(x,|Du|)]^{\gamma-1}_{t}h(x,|Du|)\,dx.
\end{aligned}
\end{equation}
Note that, in the above estimate, we have changed variables and used the inequality 
\[ [h(x,|Du|)]^{\gamma-1}_{t/(8\cdot3^{q-2}c_l)} \le [h(x,|Du|)]^{\gamma-1}_{t}, \]
which is valid since $8\cdot3^{q-2}c_l >1$.

As for the integral of $\mathbf{M}_1(\mu)$, we again use Fubini's theorem to have
\begin{equation}\label{int.23}
\begin{aligned}
& \int_{2\cdot3^{q-2}c_{l}\lambda_0}^{t}\lambda^{\gamma-2}\int_{\mathcal{E}(r_2,\lambda/(8\cdot3^{q-2}c_l M))}\mathbf{M}_1(\mu)\,dx\,d\lambda \\
& \le \int_{0}^{\infty}\lambda^{\gamma-2}\int_{\mathcal{E}(r_2,\lambda/(8\cdot3^{q-2}c_l M))}\mathbf{M}_1(\mu)\,dx\,d\lambda \le \frac{(8\cdot3^{q-2}c_l M)^{\gamma-1}}{\gamma-1}\int_{B_{r_2}}[\mathbf{M}_1(\mu)]^{\gamma}\,dx.
\end{aligned}
\end{equation}
Plugging \eqref{int.2nd}, \eqref{int.22} and \eqref{int.23} into \eqref{int.start}, and then making elementary manipulations, we arrive at
\begin{equation}\label{int.3rd}
\begin{aligned}
& \mean{B_{r_1}}[h(x,|Du|)]^{\gamma-1}_{t}h(x,|Du|)\,dx \\
& \le c_{f}^{\gamma}c_{l}^{\gamma-1}S(R_0,K,M)\mean{B_{r_2}}[h(x,|Du|)]^{\gamma-1}_{t}h(x,|Du|)\,dx \\
& \quad + c_{f}^{\gamma}c_{l}^{\gamma-1}M^{\gamma-1}S(R_0,K,M)\mean{B_{r_2}}[\mathbf{M}_1(\mu)]^{\gamma}\,dx + c_{f}^{\gamma}c_{l}^{\gamma-1}\lambda_{0}^{\gamma},
\end{aligned}
\end{equation}
where $c_{l}\equiv c_{l}(\data) \ge 1$ has been given in \eqref{lip.lambda} and $c_{f} \equiv c_{f}(n,q) \ge 1$. Note that we have used the inequality $|B_{r_2}|/|B_{r_1}| \le 2^n$ and that, recalling the definition of $\lambda_0$  in \eqref{def.lambda0}, we have estimated the second term in the right-hand side of \eqref{int.2nd} as
\begin{equation*}
\begin{aligned}
\int_{0}^{2\cdot3^{q-2}c_{l}\lambda_0}\lambda^{\gamma-2}\int_{E(r_1,\lambda)}h(x,|Du|)\,dx\,d\lambda & \le \int_{0}^{2\cdot3^{q-2}c_{l}\lambda_0}\lambda^{\gamma-2}\,d\lambda \int_{B_{r_2}}h(x,|Du|)\,dx \\
& \le \frac{(2\cdot3^{q-2}c_l)^{\gamma-1}\lambda_{0}^{\gamma-1}}{\gamma-1}\int_{B_{r_2}}h(x,|Du|)\,dx \\
& \le \frac{(2\cdot3^{q-2})^{\gamma-1}c_{l}^{\gamma-1}}{\gamma-1}\lambda_{0}^{\gamma}|B_{r_2}|.
\end{aligned}
\end{equation*}

We now deal with the quantity $S(R_0,K,M)$ defined in \eqref{def.S}; note that all the above estimates are valid for any choices of $R_0 \in (0,1)$ satisfying \eqref{R0.ini} and \eqref{R0.ini2}, $K \ge 4$ and $M \ge 1$, with all the constants, except for the ones in the definition  of $S(R_0,K,M)$, remaining uniformly bounded and ultimately depending only on $\data$. 
Here we fix $R_0$, $K$ and $M$ in order to have
\begin{equation}\label{fix.para}
c_{f}^{\gamma}c_{l}^{\gamma-1}S(R_0,K,M) \le \frac{1}{2}.
\end{equation}
More precisely, we start fixing $K \equiv K(\data,\gamma) \ge 4$ as
\begin{equation}\label{fix.K}
K \coloneqq \left(2^{4q}c_{f}^{\gamma}c_{l}^{\gamma}\tilde{c}\right)^{q/(p-1)},
\end{equation}
where $\tilde{c}\equiv \tilde{c}(n,p,q,\nu,L)$ appears in \eqref{comp.comb}. This in turn determines the constant $c_K$ in \eqref{comp.comb} as a function of $\data$ and $\gamma$ only. We then choose $M \equiv M(\data,\gamma) \ge 1$ as
\begin{equation}\label{fix.M}
M \coloneqq 16c_{f}^{\gamma}c_{l}^{\gamma}c_{0}c_{*}.
\end{equation}
We finally choose $R_0 \equiv R_0 (\data,|\mu|(\Omega),\gamma) \in (0,1)$ small enough to have \eqref{R0.ini} and 
\begin{equation}\label{fix.R0}
R_0 \le \left(\frac{1}{2^{4q}c_{f}^{\gamma}c_{l}^{\gamma}c_{K}}\right)^{1/\sigma},
\end{equation}
where $\sigma \equiv \sigma (n,p,q,\alpha)$ is given in \eqref{def.sigma}; in particular, we also have \eqref{R0.ini2}. 
These choices of parameters in \eqref{fix.K}, \eqref{fix.M} and \eqref{fix.R0} yield \eqref{fix.para}, which with \eqref{int.3rd} in turn leads to
\begin{equation*}
\begin{aligned}
& \mean{B_{r_1}}[h(x,|Du|)]^{\gamma-1}_{t}h(x,|Du|)\,dx \\
& \le \frac{1}{2}\mean{B_{r_2}}[h(x,|Du|)]^{\gamma-1}_{t}h(x,|Du|)\,dx + c_{\gamma}\mean{B_{2R}}[\mathbf{M}_1(\mu)]^{\gamma}\,dx \\
& \quad + \frac{c^{\gamma}R^{n\gamma}}{(r_2 - r_1)^{n\gamma}}\left(\mean{B_{2R}}[h(x,|Du|)+M\mathbf{M}_1(\mu)]\,dx \right)^{\gamma},
\end{aligned}
\end{equation*}
where $c\equiv c(\data)$, $c_{\gamma} \equiv c_{\gamma}(\data,\gamma)$ and $M \equiv M(\data,\gamma)$; we have also used the definition of $\lambda_0$ in \eqref{def.lambda0}. Moreover, we note that the constants in the above estimate are independent of the parameter $t \ge 4\cdot 3^{q-2}c_{l}\lambda_0$. 

We are now in a position to apply Lemma \ref{tech.lemma} with the choices $\ell = n\gamma$ and
\[ Z(s) \coloneqq \mean{B_s}[h(x,|Du|)]^{\gamma-1}_{t}h(x,|Du|)\,dx, \qquad R \le s \le 2R, \]
which is obviously a bounded function. Then, making a few elementary manipulations together with Jensen's inequality and recalling that $M \ge 1$ has been determined in \eqref{fix.M} as a function of $\data$ and $\gamma$, we arrive at
\[ \mean{B_R}[h(x,|Du|)]^{\gamma-1}_{t}h(x,|Du|)\,dx \le c^{\gamma}\left(\mean{B_{2R}}h(x,|Du|)\,dx\right)^{\gamma} + c_{\gamma}\mean{B_{2R}}[\mathbf{M}_1(\mu)]^{\gamma}\,dx. \]
Letting $t\to\infty$ in the above display, we conclude with
\[ \mean{B_R}[h(x,|Du|)]^{\gamma}\,dx \le c^{\gamma}\left(\mean{B_{2R}}h(x,|Du|)\,dx\right)^{\gamma} + c_{\gamma}\mean{B_{2R}}[\mathbf{M}_1(\mu)]^{\gamma}\,dx \]
whenever $R \le R_0$ with $R_0 \equiv R_0(\data,|\mu|(\Omega),\gamma) \in (0,1)$ satisfying \eqref{R0.ini} and \eqref{fix.R0}, and where $c\equiv c(\data)$ and $c_{\gamma}\equiv c_{\gamma}(\data,\gamma)$. This shows \eqref{main.est}, which along with a standard covering argument leads to \eqref{cz.implication}. The proof is finally complete. \qed


\subsection*{Conflict of interest} The authors declare that they have no conflict of interest.

\subsection*{Data availability} Data sharing not applicable to this article as no datasets were generated or analyzed during the current study.

\end{document}